\definecolor{hot}{RGB}{65,105,225} 
\newtheorem{thm}{Theorem}[section]
\theoremstyle{definition}
\newtheorem{example}[thm]{Example}
\newtheorem{theorem}[thm]{Theorem}
\newtheorem{lem}[thm]{Lemma}
\newtheorem{prop}[thm]{Proposition}
\newtheorem{cor}[thm]{Corollary}
\theoremstyle{definition}
\newtheorem{defn}[thm]{Definition}
\newtheorem{rem}[thm]{Remark}
\newtheorem{notation}[thm]{Notation}
\numberwithin{equation}{section}
\newcommand{\PP}{\mathbb{P}}
\newcommand{\CC}{\mathbb{C}}
\newcommand{\sym}{{\mathrm{Sym}}}
\newcommand{\gr}{{\mathrm{GR}}}
\newcommand{\sgr}{{\mathrm{SGR}}}
\newcommand{\codim}{{\mathrm{codim}}}
\title{The symmetric geometric rank of symmetric tensors}
\author[Lindberg]{Julia Lindberg}
\address{
Julia Lindberg \\
University of Texas-Austin \\ Austin TX, USA
}
\email{julia.lindberg@math.utexas.edu}
\urladdr{\url{https://sites.google.com/view/julialindberg/home}}
\author[Santarsiero]{Pierpaola Santarsiero}
\address{
 Pierpaola Santarsiero\\
Osnabrück University and MPI MiS \\ Germany
}
\email{pierpaola.santarsiero@uni-osnabrueck.de}
\urladdr{\url{https://pierpaolasantarsiero.wixsite.com/pierpaola}}
\begin{document}

\maketitle

\begin{abstract}
Inspired by recent work of Kopparty-Moshkovitz-Zuiddam and motivated by problems in combinatorics and hypergraphs, we introduce the notion of the symmetric geometric rank of a symmetric tensor. This quantity is equal to the codimension of the singular locus of the hypersurface associated to the tensor. We first derive fundamental properties of the symmetric geometric rank. Then, we study the space of symmetric tensors of prescribed symmetric geometric rank, which are spaces of homogeneous polynomials whose corresponding hypersurfaces have a singular locus of bounded codimension.
\end{abstract}

\section{Introduction}
In the last few decades, several notions of tensor rank have arisen and decomposing a tensor in terms of a particular notion of rank has become a useful tool in recovering hidden information in the data of a given tensor  \cite{ABMM,BBM,BT,blasiak2017cap,BBdiff,BBapolarity,CFTZsymsub,CGJ19,CGLM,derksgstable,GOV19,gowers2011linear,strassensubrank,tao2016symmetric}.
		
Recently, a new notion of rank for
tensors was introduced in \cite{kopparty2020geometric}. Given a tensor $T=(t_{i,j,k}) \in \mathbb{C}^{n_1+1}\otimes \mathbb{C}^{n_2+1}\otimes \mathbb{C}^{n_3+1}$, for $k=0,\dots,n_3$ denote $T_k=(t_{i,j,k})_{i,j}\in \mathbb{C}^{n_1+1}\otimes \mathbb{C}^{n_2+1}$ the $n_3+1$ matrices obtained by slicing the tensor with respect to the third factor. The authors define the \emph{geometric rank} $\mathrm{GR}(T)$ of $T$ in terms of the codimension of an affine algebraic variety. Namely,
\[
\mathrm{GR}(T):=\codim \{ (x,y) \in \mathbb{C}^{n_1+1}\times \mathbb{C}^{n_2+1} ~\vert ~ x^TT_0y=\cdots = x^TT_{n_3}y=0 \}.
\] 
In \cite[Theorem 3.2]{kopparty2020geometric} the authors proved that the definition of geometric rank is invariant with respect to any of the three the slicings. 
This notion of rank can be naturally extended for an arbitrary number of factors (cf. \cite[Section 2]{kopparty2020geometric}) and was introduced as a new tool for studying tensors and hypergraphs. A geometric study of this notion of rank was considered in \cite{GL22}, where the authors made a classification of tensors with geometric rank one and two. In a subsequent work \cite{G22}, the first author of \cite{GL22} classified tensors with geometric rank three.

A natural question is whether, as with other notions of tensor rank, there is a symmetric version of the geometric rank of a symmetric tensor. 
In this paper we address this problem by introducing the notion of the \emph{symmetric geometric rank} of a symmetric tensor. More precisely, let $T=(t_{i,j,k})\in \sym^3\mathbb{C}^{n+1}$ be a symmetric tensor, and as before, denote $T_k=(t_{i,j,k})_{i,j}$ for $k=0,\dots,n$. 
For a symmetric tensor $T\in \sym^3\mathbb{C}^{n+1}$, 
we define the \emph{symmetric geometric rank} of $T$ as
\begin{equation}\label{eq:symgr}
\sgr(T) = \codim \{ x \in \mathbb{C}^{n+1} ~\vert ~ x^TT_0x=\cdots =x^TT_{n}x=0 \}.
\end{equation}
A tensor $T\in \sym^3\mathbb{C}^{n+1}$ can naturally be seen as a degree three homogeneous polynomial $F \in \CC[x_0,\ldots,x_{n}]$, for example by considering the $(i,j,k)$th entry of the tensor as the coefficient of $x_{i}x_jx_k $, and throughout the rest of this paper we interchangeably consider a tensor both as an element $T \in \sym^3\CC^{n+1}$ or as a homogeneous polynomial $F \in \CC[x_0,\ldots,x_{n}]_3$.

By considering an order three tensor as a homogeneous cubic polynomial, it is easy to see that for all $k=0,\dots,n$ the equation $x^TT_kx$=0 in \eqref{eq:symgr}  corresponds to considering 
$\frac{\partial F}{\partial x_k}=0$  
and hence when $F$ is the cubic polynomial corresponding to a tensor $T$, \eqref{eq:symgr} is equivalent to
\[
\codim \left\{
x \in \mathbb{C}^{n+1} ~\middle|~
\frac{ \partial F}{\partial x_0} (x) = \dots =  \frac{ \partial F}{\partial x_{n}}(x) =0 
\right\}.
\]

By passing to the projective setting, we recognize that the above codimension is actually the codimension in $\mathbb{P}^n$ of the singular locus, $\mathrm{Sing}(F)$, of the (possibly non-reduced) hypersurface in $\mathbb{P}^{n}$ defined by $\{ [x]\in \mathbb{P}^n ~\vert~ F(x)=0\}$. 
Thus we define the symmetric geometric rank of an order $d$ tensor as follows.
\begin{defn}Let $[T]\in \mathbb{P}(\sym^d\mathbb{C}^{n+1})$ and let $F\in \mathbb{C}[x_0,\dots,x_n]_d$ be the homogeneous degree $d$ polynomial associated to $F$. The \emph{symmetric geometric rank} of $T$~is
	\begin{align*}
	\sgr(T):&=\codim (\mathrm{Sing}(F)) \\[0.1em]
&= \codim \left\{
x \in \mathbb{P}^{n} ~\middle|~
\frac{ \partial F}{\partial x_0} (x) = \dots =  \frac{ \partial F}{\partial x_{n}}(x) =0 
\right\},
\end{align*}
 where we emphasize that we are considering the codimension of $\mathrm{Sing}(F) $ in $\mathbb{P}^n$, not $V(F)$. We also emphasize that in contrast to \cite{kopparty2020geometric}, we chose to work with projective spaces instead of affine spaces, but the notion of symmetric geometric rank is the same whether one works affinely or projectively. 
\end{defn} 
It is clear that the notion of symmetric geometric rank is invariant with respect to any choice of the flattening.

We find interesting that the singular locus of a hypersurface, which is a classical object in algebraic geometry, appears in the context of tensors. This was also highlighted in \cite{LTranksym}, where the authors gave a bound on the symmetric rank in terms of dimensions of singular loci   \cite[Theorem 1.3]{LTranksym}. In addition, a detailed study on the rank of cubic hypersurfaces according to the classification of their singular loci was done in \cite{SS20}. 

\subsection*{Symmetric geometric rank as a useful tool for hypergraphs}
We remark that the notion of the symmetric geometric rank of a symmetric tensor is relevant for the same applications that inspired the definition of geometric rank in \cite{kopparty2020geometric}.
This is because many of the motivating applications in combinatorics consider a highly structured tensor, which is symmetric.
More precisely, one of these applications 
comes from hypergraph theory (\cite[Section 1.3]{kopparty2020geometric}). We recall that an (undirected uniform) \emph{hypergraph} is a couple $(V,E)$, where $V=\{1,\dots,n\}$ is a finite set collecting the \emph{vertices} of the hypergraph, while $E\subset 2^V$ contains the \emph{edges} and we require that all edges $e\in E$ have the same cardinality $d$. 
One interesting problem is to compute the \emph{independence number} of the hypergraph, which is the cardinality of the largest set of vertices containing no edges of $(V,E)$. One way to bound such a quantity is to compute the subrank $\mathrm{Q}(T)$ of the associated tensor $T$, which is the maximum identity tensor ``contained'' in $T$. 
Indeed, we remark that given a hypergraph $(V,E)$ as above, we can associate a tensor in $ (\mathbb{C}^{n})^{\otimes d}$ to the hypergraph as follows. We define $T=(t_{i_1,\dots,i_d})$ as
\begin{equation}\label{eq:tensor_hypergraph}
t_{i_1,\dots,i_d}:= \begin{cases}
1, & \hbox{ if } \{i_1,\dots,i_d\}\in E\\
0, &\hbox{otherwise.}
\end{cases}
\end{equation}
Computing the subrank of the associated tensor is computationally intractable in many cases. Therefore, since the geometric rank is an upper bound for the subrank, it can be seen as a new tool to bound this quantity. 

However, an obvious observation is that the tensor defined in \eqref{eq:tensor_hypergraph} is symmetric and hence it is desirable to exploit its structure. Taking advantage of the symmetric structure of the tensor associated to a hypergraph is also one of the motivations for the introduction of the symmetric subrank in \cite[Section 1, Section 2.2]{CFTZsymsub}. Unfortunately, as with the notion of subrank, the symmetric subrank, $\mathrm{Q}_s(T)$, of a tensor $T$ is hard to compute. Our new notion of rank is then useful to bound this quantity, since a consequence of the forthcoming \Cref{thm:bound_sym_subrank} is that, for a symmetric tensor $T$,
$$
\mathrm{Q}_s(T) \leq \sgr(T)\leq \gr(T).
$$
Even though the purpose of this manuscript is to explore geometric aspects of this new notion of rank, the above discussion motivates further investigation of the symmetric geometric rank as a tool to study the aforementioned types of problems.

\subsection*{Parametrizing tensors with respect to symmetric geometric rank}
With the introduction of any notion of rank, it is natural to consider spaces of tensors with that prescribed rank. We study these spaces for the symmetric geometric rank and stratify the symmetric tensor space by these spaces.

\begin{defn}\label{spaces_sgr_r} Fix a positive integer $d$ and let $r\geq 0$. The \emph{space of symmetric $d$-factors tensors with symmetric geometric rank at most $r$} is 
	$$
	\mathcal{S}_{d,r}:=\{ [T]\in \PP (\sym^d \CC^{n+1}) ~\vert ~ \sgr(T)\leq r   \}.
	$$
\end{defn} 

By considering a tensor $T \in \PP(\sym^d \CC^{n+1})$ as a homogeneous polynomial $F \in \PP(\sym^d \CC^{n+1})^*$, we can define $\mathcal{S}_{d,r}$ from the following incidence variety
\begin{align*}   
	\widetilde{Sing}=\{ (x, [F]) \, \vert \, V(F) \mbox{ is singular on $x$ } \}\subset \mathbb{P}^n \times \mathbb{P}^{{{n+d} \choose d}-1}.
\end{align*}
Denote $\pi_2\colon \widetilde{Sing} \rightarrow \mathbb{P}^{{{n+d} \choose d}-1}$ as the projection onto the second factor and $\pi_1$ as the projection onto $\mathbb{P}^n$. With the above geometric construction we can rephrase the space of tensors having prescribed symmetric geometric rank as
\begin{align*}
	\mathcal{S}_{d,r} &= \{ [F] \in \PP(\sym^d\mathbb{C}^{n+1})^* \, \vert \, \dim \mathrm{Sing}([F]) \geq n-r   \}\\
	&=\{ [F] \in \PP(\sym^d\mathbb{C}^{n+1})^* \, \vert \, \dim \pi_1(\pi_2^{-1}([F]))\geq n-r  \}.
\end{align*}

This construction via the incidence variety  $\widetilde{Sing}$ and the upper semicontinuity of the dimension of a fiber, shows that each $\mathcal{S}_{d,r}$ is a Zariski closed set in $\PP(\sym^d\mathbb{C}^{n+1})^*$. 
This then implies that the following chain of inclusions holds:
\begin{align*}\label{eq:skr_chain}
\PP(\sym^d\mathbb{C}^{n+1})=\mathcal{S}_{d,n+1} \supset \mathcal{S}_{d,n}\supset \cdots \supset \mathcal{S}_{d,1}\neq \emptyset,
\end{align*}
where we remark (and later prove) that $\mathcal{S}_{d,1}$ contains the $d$th Veronese variety $\nu_d(\mathbb{P}^n)$ so it is not empty. Moreover, we recognize that $\mathcal{S}_{d,n}$ is a classical object in algebraic geometry whose corresponding defining equation is known as the \emph{discriminant} \cite{GKZ}. 
Notice also that the smallest element in the above chain should actually be $\mathcal{S}_{d,0}$ which consists of classes of homogeneous polynomial whose corresponding hypersurface in $\mathbb{P}^n$ has an $n$-dimensional singular locus. The only such polynomial is the zero polynomial, so $\mathcal{S}_{d,0}=\emptyset$ and hence it will not play a role in our discussion.
\medskip

\subsection*{Outline of the paper} We begin in \Cref{sec:preliminary} by reviewing standard notions related to symmetric tensors and their geometry. In Section \ref{sec:properties} we prove basic properties of the symmetric geometric rank (\Cref{lem:monotone}, \Cref{lem:additive}, \Cref{lem:subadditive}) as well as compare this notion of rank with other commonly used definitions of rank.  Section ~\ref{sec:spacesprescribedSGR} is devoted to classifying spaces of symmetric tensors with prescribed symmetric geometric rank. We give a geometric interpretation of these spaces in \Cref{thm:full_S_dr} and we provide full classifications of $\mathcal{S}_{d,r}$ when $(d,r) \in \{(2,r), (d,1), (3,2) \}$ (\Cref{prop:sgr_mats}, \Cref{thm:S_{3,1}}, \Cref{thm:complete_S_k1} and \Cref{thm:S_{3,2}}). 
In the last section we compute the symmetric geometric rank of relevant tensors such as the Big and Small Coppersmith-Winogard tensor, the maximal compressibility tensor, and the symmetrized part of the matrix multiplication tensor (\Cref{ex:bigCW}, \Cref{ex:smallCW}, \Cref{ex:maxc} and \Cref{cor:sgr_sym_matr_mult}). We also give a concrete description of the decomposition of the symmetric tensor space via $\mathcal{S}_{d,r}$for $d=3$ in the particular cases of $n=1,2$.

\subsection*{Acknowledgements} We would like to thank Jose Rodriguez for his input and guidance, especially at the beginning of this project.
We are grateful to Daniele Agostini, Ángel Ríos and Jeroen Zuiddam for helpful conversations on the problems addressed in this paper. We also want to thank Andreas Kretschmer for pointing us to the result of \Cref{thm:full_S_dr}.
We acknowledge the Thematic Research Programme ``Tensors: geometry, complexity and quantum entanglement", University of Warsaw, Excellence Initiative – Research University and the Simons Foundation Award No. 663281 granted to the Institute of Mathematics of the Polish Academy of Sciences for the years 2021-2023.

Santarsiero was partially supported by the Deutsche Forschungsgemeinschaft (DFG, German Research Foundation) -- Projektnummer 445466444.

\section{Preliminaries}\label{sec:preliminary}
Throughout the rest of this paper we work over $\mathbb{C}$.

\begin{notation}\label{notation: convention}
	We denote $e_0,\dots,e_{n}$ as the canonical basis of $\CC^{n+1}$ and when representing a tensor $T\in (\CC^{n+1})^{\otimes d} $ in coordinate as $T=(t_{i_1,\dots,i_d})$, we will always consider it with respect to canonical basis on each factors. 
\end{notation}

\begin{notation}
Note that any tensor $T \in (\CC^{n+1})^{\otimes d}$ can be seen as a multilinear polynomial, $F$, in the variable groups $\{x_1 = \{x_{1,1},\ldots,x_{1,n+1}\},$ $\ldots ,x_d = \{x_{d,1},\ldots,x_{d,n+1}\}\}$. Further, symmetric tensors can be seen as homogeneous polynomials. In the rest of the paper we will treat elements in $\sym^d(\mathbb{C}^{n+1})$ both as symmetric tensors and as homogeneous polynomial of degree $d$, exploiting the isomorphism $\sym^d\mathbb{C}^{n+1}\cong (\sym^d\CC^{n+1})^*$ that sends $t_{i_1,\dots,i_d}$ to $t_{i_1,\dots,i_d} x_{i_1}\cdots x_{i_d}$. 
	\end{notation}

Let us start by recalling basic facts about symmetric tensors and their geometry.
\begin{defn}
	A symmetric tensor $T\in \sym^d\mathbb{C}^{n+1}$ is called \emph{elementary} if it can be written as the tensor product of the same vector $v\in \mathbb{C}^{n+1}$, i.e.
	$$
	T= \underbrace{v \otimes \cdots \otimes v}_{d \text{ times}}.
	$$ 
\end{defn}
Elementary symmetric tensors are the building blocks of a symmetric tensor decomposition and any symmetric tensor can be decomposed as a sum of symmetric elementary tensors.
\begin{defn} The minimum number of symmetric elementary tensors needed to decompose a tensor $T\in \sym^d\mathbb{C}^{n+1}$ is the \emph{symmetric rank} $r_s(T)$ of $T$, also called \emph{Waring rank}.
	\end{defn}
Symmetric elementary tensors are symmetric rank one tensors and projective classes of symmetric rank one tensors are parameterized via the following map.
\begin{defn}
Let $N={{n+d} \choose d}-1$. The \emph{$d$th Veronese embedding} is
\begin{align*}
\nu_d \colon \mathbb{P}^{n} &\longrightarrow \mathbb{P}^N=\mathbb{P}(\sym^d\mathbb{C}^{n+1})\\
[v]&\mapsto [v^{\otimes d}].
\end{align*}
The image of the above map $X_{d}:=\nu_d(\mathbb{P}^{n})$ is the \emph{$d$th Veronese variety}.
\end{defn}

Recall that a projective variety $\subset \PP^n$ is \emph{non-degenerate} if it is not contained in any hyperplane, i.e. $\mathrm{span} \{ X\}=\PP^n$. 

Symmetric rank one tensors are parameterized by Veronese varieties but when dealing with higher rank tensors, it is useful to look at the following auxiliary varieties.

\begin{defn}
Let $X\subset \mathbb{P}^{N}$ be an irreducible, non-degenerate projective variety and fix a positive integer $r$.
The \emph{$r$th secant variety} $\sigma_r(X)$ of $X$ is 
\[
\sigma_r(X)= \overline{ \bigcup_{p_1,\dots,p_r \in X} \mbox{span} \{p_1,\dots,p_r \}}.
\]
\end{defn}
We recall that secant varieties form a chain of subvarieties in which the previous variety is contained
in the subsequential variety until we reach the ambient space:
$$
X=\sigma_1(X) \subset \sigma_2(X)\subset \cdots \subset \sigma_t(X)=\mathbb{P}^N,
$$
where the smallest integer $t$ such that $\sigma_t(X)=\mathbb{P}^N$ is called the \emph{generic rank} of $X$.

From a parameter count one can easily see that
$$
\dim(\sigma_r(X))\leq \min\{ r(\dim X+1)-1, N  \},
$$
where sometimes the above inequality is strict.
\begin{defn}\label{def:defectivity}
	An irreducible non-degenerate variety $X$ is $r$-\emph{defective} if $$
 \dim(\sigma_r(X))<\min\{ r(\dim X+1)-1, N  \}.
	$$
\end{defn}

Let us introduce another variety that will be useful for understanding the symmetric geometric rank.

\begin{defn}Let $X\subset \mathbb{P}^N$ be a projective variety.
The \emph{tangential variety} $\tau(X)$ of $X$ is the Zariski closure of the union of all tangent spaces at smooth points of $X$.
\end{defn}

\begin{rem}\label{rem:tangential_LM^2}
 Let $X_d\subset \mathbb{P}^N$ be the $d$th Veronese variety. It is classically known that any element in $\tau(X_d)$ can be seen as $[L^{d-1}M]$ for some linear forms $L,M$ \cite[Section 1]{CGG02}.
\end{rem}

We recall that the general linear group $\mathrm{GL}_{n+1}(\mathbb{C})$ acts on an elementary symmetric tensor $v^{\otimes d}\in \sym^d\mathbb{C}^{n+1}$ as
$$
g\cdot v^{\otimes d} = (g\cdot v)^{\otimes d},
$$
for $g \in \mathrm{GL}_{n+1}(\mathbb{C})$. By linearity, this action can be extended to any element of $\sym^d\mathbb{C}^{n+1}$. Moreover, the same type of operation can be extended to non invertible matrices. Let $M_{n+1}(\mathbb{C})$ denote the set of $(n + 1) \times (n+1)$ matrices with coefficients in $\CC$. For  $A \in M_{n+1}(\mathbb{C})$ and  $T\in \sym^d\mathbb{C}^{n+1}$
we have
\[A \cdot T := \sum_{i_1,\dots,i_d} t_{i_1,\dots,i_d} Ae_{i_1} \otimes \dots \otimes A e_{i_d}.
\]
This operation induces a partial ordering of symmetric tensors.

\begin{defn}\label{def:leq}
Let $S, T\in \sym^d\mathbb{C}^{n+1}$, we write 
$S\leq T$ if there exists a matrix $A\in M_{n+1}(\mathbb{C})$ such that 
\[
S = A \cdot T.
\]
\end{defn}
\begin{notation}
For every $r \in \mathbb{N}_{>0}$ with $r\leq n$, denote $I_r\in \sym^d\mathbb{C}^{n+1}$ as the tensor $$ I_r=\sum_{i=0}^r (e_i)^{\otimes d}.$$  
	\end{notation}

Since it will be useful later, let us recall other relevant notions of symmetric rank.

\begin{defn}\label{rmk:other_ranks}
Let $T\in (\mathbb{C}^{n+1})^{\otimes d}$ and denote  $T(x_1,\dots,x_d)$ as the description of $T$ as a multilinear polynomial

Let $S\in \sym^d\CC^{n+1}$. 
\begin{enumerate} \setlength\itemsep{.5em}
    \item[$\circ$] The \emph{Geometric rank} of $T$ \cite{kopparty2020geometric} is  $$ \gr(T)=\codim\{ (x_1,\dots,x_{d-1}) \in (\mathbb{C}^{n+1})^{d-1} ~\vert ~ \forall x_{d}\in \mathbb{C}^{n+1}, \  T(x_1,\dots,x_d)=0   \}.$$ 
    \item[$\circ$] The \emph{Symmetric subrank} of  $S$ \cite{CFTZsymsub} is $$\mathrm{Q}_s(S) = \max \{ r ~\vert ~ A\cdot S = I_r \hbox{ for some } A\in M_{n+1}(\CC) \}.  \quad  $$
\end{enumerate}
\end{defn}

We conclude this section with the notion of a normal variety. 
\begin{defn}
Let $X\subset \PP^N$ be a non-degenerate projective variety, so in particular $X$ is covered by affine varieties $X_i=U_i\cap X$ for affine charts $U_i$, $i=0,\dots,N$. We say that $X$ is \emph{normal} if for all $i=0,\dots, N$ the coordinate sheaf $\mathcal{O}(X)$ is integrally closed in $\mathrm{Frac}(\mathcal{O}(X))$. 	
\end{defn}

\begin{thm}[{{\cite[Chapter 5, Theorem 2.19]{Shaf}}}]\label{rem:normal_sing}
If $X $ is an irreducible, normal variety then $\codim (\mathrm{Sing}(X)) \geq 2$ in $X$.
\end{thm}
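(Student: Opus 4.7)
The plan is to reduce the claim to a purely local statement: it suffices to show that every point $x \in X$ with $\dim \mathcal{O}_{X,x} = 1$ is a smooth point of $X$. Indeed, since $X$ is irreducible, a point $x$ has $\codim_X(\overline{\{x\}}) = 1$ if and only if $\mathcal{O}_{X,x}$ is a one-dimensional local ring; showing smoothness at every such point says that $\mathrm{Sing}(X)$ contains no codimension-$1$ subvariety, which is equivalent to $\codim_X(\mathrm{Sing}(X)) \geq 2$.

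So fix $x \in X$ with $\dim \mathcal{O}_{X,x} = 1$, and set $A := \mathcal{O}_{X,x}$. Because $X$ is irreducible, $A$ is an integral domain, and it is Noetherian since $X$ is a variety. By the normality hypothesis, after passing to an affine chart containing $x$, the coordinate ring is integrally closed in its fraction field, hence so is the localization $A$. Thus $A$ is a one-dimensional Noetherian local integral domain that is integrally closed in $\mathrm{Frac}(A)$. The next step is to invoke the standard commutative algebra fact that such a ring is a discrete valuation ring (DVR). The argument uses Krull's intersection theorem ($\bigcap_n \mathfrak{m}^n = 0$) together with the observation that $\mathfrak{m}^{-1} := \{z \in \mathrm{Frac}(A) : z\mathfrak{m} \subseteq A\}$ strictly contains $A$ (because $\mathfrak{m}$ is the unique nonzero prime, so some suitable element of the form $y/f \in \mathfrak{m}^{-1}\setminus A$ exists), from which integral closure forces $\mathfrak{m}^{-1}\mathfrak{m} = A$. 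Consequently $\mathfrak{m}$ is invertible, and in a local ring an invertible ideal is principal, so $\mathfrak{m} = (t)$ for some $t$; hence $A$ is a DVR.

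A DVR is a regular local ring of dimension one, so $x$ is a smooth point of $X$. This proves that no codimension-$1$ point of $X$ lies in $\mathrm{Sing}(X)$, and the theorem follows.

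The main obstacle is the commutative-algebra lemma ``one-dimensional Noetherian local integrally closed domain $\Longrightarrow$ DVR'': once this is in hand, the geometric reduction is routine. In a paper-style write-up one would simply cite this fact (e.g.\ from Atiyah--Macdonald or Matsumura), which is exactly why the authors cite \cite{Shaf} for the whole statement rather than reproving it here.
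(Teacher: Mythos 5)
Your argument is correct, and it is essentially the proof in the source the paper cites: the paper itself offers no proof of this statement, deferring entirely to Shafarevich, where the argument is exactly your reduction to codimension-one points followed by the standard commutative-algebra fact that a one-dimensional Noetherian local integrally closed domain is a DVR, hence regular. No gaps; the only point worth making explicit in a careful write-up is that regularity at the generic point of a codimension-one subvariety rules out that subvariety lying in $\mathrm{Sing}(X)$ (openness of the regular locus, automatic over $\mathbb{C}$).
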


\section{Properties of symmetric geometric rank}\label{sec:properties}
We now wish to prove analogous properties for symmetric geometric rank as those established for geometric rank in \cite[Section 4]{kopparty2020geometric}. We first prove that the  symmetric geometric rank is monotone.

\begin{lem}\label{lem:monotone} Let $S,T\in \sym^d\mathbb{C}^{n+1}$. If $S \leq T$, then $\sgr(S) \leq \sgr(T)$.
\end{lem}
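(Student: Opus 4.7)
The plan is to reduce to the invertible case by a perturbation argument, leveraging the fact (established in the introduction) that each $\mathcal{S}_{d,r}$ is Zariski closed in $\PP(\sym^d\CC^{n+1})$.

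First I would translate the action to polynomials: a direct coordinate calculation using $A e_i = \sum_j A_{ji} e_j$ shows that the homogeneous polynomial attached to $A\cdot T$ is $F_{A\cdot T}(x)=F_T(A^T x)$. For the case $A\in\mathrm{GL}_{n+1}(\CC)$, the chain rule gives $\nabla F_{A\cdot T}(x)=A\,\nabla F_T(A^T x)$, which, since $A$ is invertible, vanishes precisely when $A^T x\in\mathrm{Sing}(F_T)$. Hence the projective automorphism $[x]\mapsto [A^T x]$ identifies $\mathrm{Sing}(F_{A\cdot T})$ with $\mathrm{Sing}(F_T)$, so their codimensions in $\PP^n$ coincide and $\sgr(A\cdot T)=\sgr(T)$.

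For arbitrary $A\in M_{n+1}(\CC)$, set $A_\epsilon:=A+\epsilon I$. For all $\epsilon\in\CC$ outside the (finite) spectrum of $-A$, the matrix $A_\epsilon$ is invertible, so by the invertible case $A_\epsilon\cdot T\in\mathcal{S}_{d,\sgr(T)}$. Because the map $A\mapsto A\cdot T$ is polynomial in the entries of $A$, it is continuous, giving $A_\epsilon\cdot T\to A\cdot T=S$ as $\epsilon\to 0$; assuming $S\neq 0$ (the case $S=0$ is trivial, as then $\sgr(S)\leq \sgr(T)$ holds vacuously), this descends to $[A_\epsilon\cdot T]\to [S]$ in $\PP(\sym^d\CC^{n+1})$. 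Zariski closedness of $\mathcal{S}_{d,\sgr(T)}$ then forces $[S]\in\mathcal{S}_{d,\sgr(T)}$, which is exactly $\sgr(S)\leq\sgr(T)$.

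The main obstacle is the non-invertible case, which cannot be handled by a direct change-of-coordinates argument: when $A$ is singular, the induced rational map $[x]\mapsto [A^T x]$ is only partially defined and its image is a proper linear subspace of $\PP^n$. The perturbation step sidesteps this by writing the non-invertible action as a limit of invertible ones and invoking the semicontinuity already baked into the closedness of $\mathcal{S}_{d,r}$. A more hands-on alternative would start from the chain-rule inclusion $(A^T)^{-1}(\mathrm{Sing}(F_T))\subseteq \mathrm{Sing}(F_S)$ and then use the affine dimension theorem applied to $\mathrm{Sing}(F_T)\cap \mathrm{Im}(A^T)$ (which is automatically nonempty because the singular locus of a homogeneous polynomial is a cone through the origin) to directly bound $\dim\mathrm{Sing}(F_S)\geq\dim\mathrm{Sing}(F_T)$, but this requires careful dimension bookkeeping across the irreducible components of the singular locus and is less clean than the perturbation route.
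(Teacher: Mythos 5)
Your proof is correct, and the invertible case is handled exactly as in the paper (via $F_{A\cdot T}(x)=F_T(A^Tx)$ and the induced projective automorphism). Where you genuinely diverge is the singular case: the paper reduces an arbitrary rank-deficient $A$ to a coordinate projection (using the invertible case to normalize) and then asserts directly that substituting a projection cannot decrease the dimension of the singular locus, since the resulting polynomial is a cone whose singular locus contains the preimage of the original one. You instead realize $A$ as the limit of the invertible matrices $A+\epsilon I$, note that each $[A_\epsilon\cdot T]$ lies in $\mathcal{S}_{d,\sgr(T)}$ by the invertible case, and conclude by the closedness of $\mathcal{S}_{d,\sgr(T)}$. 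This is not circular, because the paper establishes that closedness independently of this lemma (via the incidence variety $\widetilde{Sing}$ and upper semicontinuity of fiber dimension), and the passage from Zariski closed to closed under Euclidean limits is harmless over $\CC$; your handling of the degenerate cases ($S=0$, and $\epsilon$ avoiding the spectrum of $-A$) is also fine. The trade-off: the paper's argument is more self-contained and elementary but leaves the key dimension inequality for projections essentially as an assertion, while your degeneration argument sidesteps that bookkeeping entirely at the cost of importing the semicontinuity machinery. Both are valid; yours is arguably the more robust write-up of the two.
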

\begin{proof}
    Let $F(x)$ be the polynomial associated to $T$. Then $F(A^Tx)$ is the polynomial associated to the tensor $S = A \cdot T$.
    First observe that if $A$ is invertible, then $\sgr(S) = \sgr(T)$ since the singular locus of $F(x)$ and $F(A^Tx)$ are birationally equivalent. If $A$ is of rank $r <n+1$, then without loss of generality, we can take $A$ to be the projection onto the first $n+1 - r$ variables $x_1,\ldots,x_{n+1-r}$. This implies that 
    \[ \dim(\text{Sing}(F(A^Tx)) \geq \dim(\text{Sing}(F(x)), \]
giving the result.
\end{proof}

In the following lemma we prove that the symmetric geometric rank is additive under direct sum.

\begin{lem}\label{lem:additive}
Let $T_1, T_2\in \sym^d\mathbb{C}^{n+1}$. Then   $$\sgr(T_1 \oplus T_2) = \sgr(T_1)+\sgr(T_2).$$
\end{lem}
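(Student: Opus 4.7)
The plan is to realize $T_1 \oplus T_2$ as the symmetric tensor whose associated homogeneous polynomial is $F(x,y) = F_1(x) + F_2(y)$, where $F_i$ is the polynomial associated with $T_i$ and $x = (x_0, \dots, x_{n_1})$, $y = (y_0, \dots, y_{n_2})$ are disjoint groups of variables (so $T_1 \oplus T_2$ lives in $\sym^d \mathbb{C}^{n_1+n_2+2}$). Since the definition of $\sgr$ is insensitive to whether one works projectively or affinely, the cleanest route is to compute everything in affine coordinates.

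The key observation is that the partials of $F$ split by variable group: for every $i$ and $j$,
\[
\frac{\partial F}{\partial x_i} = \frac{\partial F_1}{\partial x_i} \quad \text{(depends only on } x\text{)}, \qquad \frac{\partial F}{\partial y_j} = \frac{\partial F_2}{\partial y_j} \quad \text{(depends only on } y\text{)}.
\]
Setting $C_i := \{v \in \mathbb{A}^{n_i+1} : \nabla F_i(v) = 0\}$, this immediately gives that the affine common vanishing locus of all partials of $F$ is the product
\[
\{(x,y) : \nabla F(x,y) = 0\} \;=\; C_1 \times C_2 \;\subset\; \mathbb{A}^{n_1+n_2+2}.
\]

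From here it is a one-line computation: the codimension of a product of affine varieties is additive in the ambient affine spaces, so
\[
\codim_{\mathbb{A}^{n_1+n_2+2}}(C_1 \times C_2) \;=\; \codim_{\mathbb{A}^{n_1+1}}(C_1) \;+\; \codim_{\mathbb{A}^{n_2+1}}(C_2),
\]
and, invoking the equivalence of affine and projective computation of symmetric geometric rank noted after \Cref{spaces_sgr_r}, the right-hand side equals $\sgr(T_1) + \sgr(T_2)$, while the left-hand side equals $\sgr(T_1 \oplus T_2)$.

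There is essentially no hard step; the only point demanding care is the boundary case where one or both of the affine loci $C_i$ degenerate to $\{0\}$ (equivalently, the projective singular locus of $F_i$ is empty). Working affinely sidesteps this issue cleanly, since the codimension formula $\codim(C_1 \times C_2) = \codim(C_1) + \codim(C_2)$ remains valid even when one factor is the single point $\{0\}$ of maximal codimension, matching the convention that $\sgr(T_i) = n_i+1$ in that case. Thus the statement reduces to the splitting of the gradient and the additivity of codimension under product.
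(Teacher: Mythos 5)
Your proof is correct and follows essentially the same route as the paper's: both write the polynomial of $T_1\oplus T_2$ as $F_1(x)+F_2(y)$ in disjoint variable groups, observe that the partials split so the singular locus is cut out by the union of the two gradient systems (equivalently, is the product $C_1\times C_2$), and conclude by additivity of codimension. Your explicit treatment of the degenerate case $C_i=\{0\}$ is a small extra care the paper omits, but the argument is the same.
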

\begin{proof}
The tensor $T = T_1 \oplus T_2$ is a block diagonal tensor with blocks $T_1$, $T_2$. Therefore, if we denote $F_1(x),F_2(y)$ as the polynomials associated to $T_1$ and $T_2$ respectively, then the polynomial associated to $T$ is $F(x,y) = F_1(x) + F_2(y)$ where the variables $x$ and $y$ are disjoint. Suppose $\text{Sing}(F_1)$ is cut out by polynomials $\{f_0,\ldots,f_{n} \} \subset \CC[x_0,\ldots,x_n]$ and $\text{Sing}(F_2)$ is cut out by polynomials $\{g_0,\ldots,g_n\} \subset \CC[y_0,\ldots,y_n]$, then $\text{Sing}(F) $ is cut out by polynomials $\{f_0,\ldots,f_n,g_0,\ldots,g_n\} \subset \CC[x_0,\ldots,x_n,y_0,\ldots,y_n\}$. Therefore, 
\[ \text{codim}(\text{Sing}(F) ) = \text{codim}(\text{Sing}(F_1)) + \text{codim}(\text{Sing}(F_2)). \qedhere\]
\end{proof}

Element wise subadditivity is another property of the symmetric geometric rank.
\begin{lem}\label{lem:subadditive}
	Let $S,T\in \sym^d\mathbb{C}^{n+1}$. Then 
 $\sgr(S+T) \leq \sgr(S)+\sgr(T)$.
\end{lem}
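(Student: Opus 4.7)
The plan is to translate the statement into the polynomial language set up earlier and exploit the fact that partial differentiation is linear. Let $F,G\in\CC[x_0,\dots,x_n]_d$ be the homogeneous polynomials associated to $S$ and $T$ respectively, so $F+G$ is the polynomial associated to $S+T$. Since $\partial(F+G)/\partial x_i=\partial F/\partial x_i+\partial G/\partial x_i$, any point $[x]\in\PP^n$ at which both $F$ and $G$ are singular is automatically a point at which $F+G$ is singular. In other words,
\[
\mathrm{Sing}(F)\cap\mathrm{Sing}(G)\ \subseteq\ \mathrm{Sing}(F+G),
\]
which already yields
\[
\sgr(S+T)\;=\;\codim\bigl(\mathrm{Sing}(F+G)\bigr)\ \leq\ \codim\bigl(\mathrm{Sing}(F)\cap\mathrm{Sing}(G)\bigr).
\]

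The next step is to bound the codimension of the intersection by the sum of the codimensions. For this I would invoke the projective dimension theorem: for any two closed subvarieties $X,Y\subseteq\PP^n$, every irreducible component of $X\cap Y$ satisfies $\codim\leq \codim X+\codim Y$. Applied to $X=\mathrm{Sing}(F)$ and $Y=\mathrm{Sing}(G)$ (reduced to their irreducible components and taking a max-dimensional component on each side, say the one realizing $\sgr(S)$ and $\sgr(T)$) this gives
\[
\codim\bigl(\mathrm{Sing}(F)\cap\mathrm{Sing}(G)\bigr)\ \leq\ \codim\bigl(\mathrm{Sing}(F)\bigr)+\codim\bigl(\mathrm{Sing}(G)\bigr)\ =\ \sgr(S)+\sgr(T),
\]
and chaining with the previous inequality finishes the argument.

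The only subtle point is the case in which $\mathrm{Sing}(F)\cap\mathrm{Sing}(G)=\emptyset$, since then the intersection theorem does not directly produce a bound on its codimension. I would handle this by observing that in that situation $\dim\mathrm{Sing}(F)+\dim\mathrm{Sing}(G)<n$, equivalently $\sgr(S)+\sgr(T)>n$, and so $\sgr(S)+\sgr(T)\geq n+1\geq \sgr(S+T)$, since the symmetric geometric rank of any (nonzero) tensor is at most $n+1$ (the codimension of the empty singular locus in $\PP^n$). With this separate treatment of the empty case, no further calculation is required, and this is really the only obstacle in what is otherwise a one-line consequence of linearity of the gradient and the projective dimension theorem.
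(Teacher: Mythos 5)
Your proof is correct, but it takes a genuinely different route from the paper. The paper derives subadditivity formally from its two preceding lemmas: it observes that $S+T \leq S\oplus T$ in the partial order of \Cref{def:leq}, applies monotonicity (\Cref{lem:monotone}) to get $\sgr(S+T)\leq \sgr(S\oplus T)$, and then invokes additivity under direct sum (\Cref{lem:additive}) to conclude. Your argument instead works directly with the singular loci: linearity of the gradient gives $\mathrm{Sing}(F)\cap\mathrm{Sing}(G)\subseteq\mathrm{Sing}(F+G)$, and the projective dimension theorem bounds the codimension of that intersection by $\sgr(S)+\sgr(T)$, with the empty-intersection case dispatched by the trivial bound $\sgr(S+T)\leq n+1$. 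The paper's route is more modular and mirrors the development of the analogous properties of geometric rank in Kopparty--Moshkovitz--Zuiddam, but it leans on the direct sum living in the larger space $\sym^d\CC^{2(n+1)}$ and on the degenerate-matrix case of monotonicity. Your route is self-contained and arguably makes the geometric content more transparent: subadditivity is literally the statement that two subvarieties of $\PP^n$ intersect in codimension at most the sum of their codimensions. One small point of care: the projective dimension theorem is stated for irreducible varieties, so after passing to top-dimensional components $X'\subseteq\mathrm{Sing}(F)$ and $Y'\subseteq\mathrm{Sing}(G)$ you should note that if $X'\cap Y'=\emptyset$ you again fall into your $\sgr(S)+\sgr(T)\geq n+1$ fallback; this is a presentational wrinkle, not a gap.
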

\begin{proof}
Since $S+T \leq S \oplus T$, by \Cref{lem:monotone} we have $\sgr(S+T) \leq \sgr(S \oplus T)$. By \Cref{lem:additive}, $\sgr(S \oplus T) = \sgr(S) + \sgr(T)$, proving the result.
\end{proof}

We now move on to comparing the symmetric geometric rank to other notions of symmetric tensor rank. We begin by comparing the symmetric geometric rank with the geometric rank. Recall that any tensor $T \in (\CC^{n+1})^{\otimes d}$ can be seen as a multilinear polynomial in variable groups $\{x_1,\ldots,x_d\}$ where $x_i = \{x_{i,1},\ldots,x_{i,n+1}\}$.

In this notation, the geometric rank of $T$ is the codimension of the variety obtained by taking the partial derivatives of $F$ with respect to one of the variable groups, namely
\[
\gr(T) = \text{codim} \biggl\{(x_1,\ldots,x_{d-1}) \in \CC^{(d-1)(n+1)} \ \bigg| \ \frac{\partial F}{\partial x_{d,1}} = 0, \ldots, \frac{\partial F}{\partial x_{d,n+1}} = 0 \biggr\}.
\]

\begin{lem}\label{lem:SGR_leq_GR}
Let $T\in \sym^d\mathbb{C}^{n+1}$. Then $\sgr(T) \leq \gr(T)$.
\end{lem}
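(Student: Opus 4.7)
The plan is to realize the variety computing $\sgr(T)$ as the restriction of the variety computing $\gr(T)$ to a small diagonal, and then conclude via the standard codimension inequality for intersections with a linear subspace. Write $F \in \CC[x_0,\dots,x_n]_d$ for the homogeneous polynomial associated to $T$, and let $G(x^{(1)},\dots,x^{(d)})$ denote the multilinear polynomial associated to $T$ as a tensor, so that $F(x) = G(x,\dots,x)$. Since $T$ is symmetric, $G$ is invariant under permutations of its variable groups. Combining the chain rule with this symmetry yields, for each coordinate index $k$,
$$ \frac{\partial F}{\partial x_k}(x) \;=\; d \cdot \frac{\partial G}{\partial x^{(d)}_k}(x,\dots,x). $$
Let $V_G \subset \CC^{(d-1)(n+1)}$ denote the affine variety cut out by the equations $\partial G/\partial x^{(d)}_k = 0$, so that $\gr(T) = \codim_{\CC^{(d-1)(n+1)}} V_G$, and let $\Delta \colon \CC^{n+1} \to \CC^{(d-1)(n+1)}$, $x \mapsto (x,\dots,x)$, be the linear diagonal embedding. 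The displayed identity shows that the affine cone over $\mathrm{Sing}(F) \subset \PP^n$ is precisely $\Delta^{-1}(V_G)$.

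The next step is a routine dimension count. The image $\Delta(\CC^{n+1})$ is a linear subspace of $\CC^{(d-1)(n+1)}$ of dimension $n+1$, defined by $(d-2)(n+1)$ linear equations, and the intersection $V_G \cap \Delta(\CC^{n+1})$ is non-empty since it contains the origin. By Krull's principal ideal theorem applied to these linear equations,
$$ \dim\bigl(V_G \cap \Delta(\CC^{n+1})\bigr) \;\geq\; \dim V_G - (d-2)(n+1). $$
Because $\Delta$ is an injective linear map, $\dim \Delta^{-1}(V_G) = \dim\bigl(V_G \cap \Delta(\CC^{n+1})\bigr)$. Rearranging and using that the codimension of a projective variety in $\PP^n$ equals the codimension of its affine cone in $\CC^{n+1}$, I obtain
$$ \sgr(T) \;=\; \codim_{\CC^{n+1}} \Delta^{-1}(V_G) \;\leq\; \codim_{\CC^{(d-1)(n+1)}} V_G \;=\; \gr(T). $$

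The only conceptual step is the first one: recognizing, via the chain rule and the symmetry of $T$, that $\mathrm{Sing}(F)$ is obtained by restricting the $\gr$-defining variety to the small diagonal. After that, the inequality is an entirely formal application of the fact that intersecting a variety with a linear subspace of codimension $c$ cannot cause the dimension to drop by more than $c$. I do not anticipate any genuinely hard obstacle, though one should be careful to distinguish the multilinear and homogeneous polynomials attached to $T$ when unpacking the chain rule.
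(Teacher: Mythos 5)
Your proof is correct and follows essentially the same route as the paper: both realize the affine cone over $\mathrm{Sing}(F)$ as the intersection of the $\gr$-defining variety with the small diagonal and then apply a codimension bound for that intersection (the paper invokes the Affine Dimension Theorem in $\CC^{d(n+1)}$ with an auxiliary copy of the variables, while you apply Krull's principal ideal theorem to the $(d-2)(n+1)$ linear equations cutting out $\Delta(\CC^{n+1})$ inside $\CC^{(d-1)(n+1)}$ — the same argument in substance). Your explicit remark that the intersection is nonempty because it contains the origin is a welcome detail that the paper leaves implicit.
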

\begin{proof}
Let $T\in \sym^d\CC^{n+1}$ be such that $\gr(T) = r$ and let $F$ be the multilinear polynomial in the variable groups $\{x_1,\ldots,x_d\}$, $x_i \in \CC^{n+1}$, associated to $T$ and $G$ the usual homogeneous polynomial in the variables $y \in \CC^{n+1}$ associated to $T$. Define, 
\begin{align*}
V_1 &= \biggl \{(x_1,\ldots,x_{d-1},y) \in \CC^{d(n+1)} \ \bigg| \ \frac{\partial F}{\partial x_{d,1}} = 0, \ldots, \frac{\partial F}{\partial x_{d,n+1}} = 0 \biggr \} \\
V_2 &= \biggl \{y \in \CC^{n+1} \ \bigg| \ \frac{\partial G}{\partial y_1} = 0, \ldots, \frac{\partial G}{\partial y_{n+1}} = 0 \biggr\}
\end{align*}
Since $\gr(T) = r$, $\dim(V_1) = d(n+1)-r$. Observe that $V_2$ is in bijection with  $V_1 \cap V_3$ where 
\[ V_3 = \{(x_1,\ldots,x_{d-1},y) \in \CC^{d(n+1)}  \ \vert~ x_{i,1} - y_0 = 0,\ldots, x_{i,n+1} - y_{n+1} = 0, \ \forall 1 \leq i \leq d-1 \}. \]
Note that $\dim(V_3) = n+1$. 
Therefore by the Affine Dimension Theorem, 
\begin{align*}
   \dim(V_1 \cap V_3) &\geq \dim(V_1) + \dim(V_3) -d(n+1)  \\
   &= d(n+1)-r + (n+1) -d(n+1)  \\
   &= n+1 - r
\end{align*} 
giving $\codim(V_2) = \codim (V_1 \cap V_3) \leq r$.
\end{proof}

The following example shows that there exist symmetric tensors for which the inequality in \Cref{lem:SGR_leq_GR} is strict. 
\begin{example}
	Let $T = e_1\otimes e_1\otimes e_2 + e_1 \otimes e_2 \otimes e_1 + e_2 \otimes e_1 \otimes e_1\in \sym^3\mathbb{C}^2$. One can directly compute that $\gr(T)=2$ and 
    $\sgr(T)=1$.	
\end{example}

We now compare the symmetric geometric rank with the symmetric subrank (\Cref{rmk:other_ranks}). To do this, we first need a preliminary lemma.

\begin{lem}\label{lem:sgr identity}
Let $r \in \mathbb{N}_{>0}$ with $r\leq n$. Then $\sgr(I_r) = r$ where $I_r \in \sym^{d}\CC^{n+1}$ is the identity tensor.
\end{lem}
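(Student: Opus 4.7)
The plan is a direct computation using the dictionary between symmetric tensors and homogeneous polynomials recorded in \Cref{sec:preliminary}. First I would translate $I_r$ into its associated polynomial. Under the isomorphism $\sym^d\mathbb{C}^{n+1}\cong (\sym^d\mathbb{C}^{n+1})^*$ sending $(e_i)^{\otimes d}$ to $x_i^d$, the tensor $I_r$ corresponds (up to a nonzero scalar) to the Fermat-type polynomial
\[
F \;=\; \sum_{i} x_i^d \;\in\; \mathbb{C}[x_0,\ldots,x_n]_d,
\]
where the sum runs over exactly the indices appearing in the definition of $I_r$.

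Next I would compute the singular locus explicitly. For each index $i$ appearing in the sum, $\partial F/\partial x_i = d\,x_i^{d-1}$, while for each index $j$ not appearing, $\partial F/\partial x_j = 0$. Since $\mathrm{char}(\mathbb{C})=0$ and $d\geq 2$, the ideal generated by the partial derivatives has radical equal to the ideal generated by those linear forms $x_i$ for which $i$ appears in the sum defining $I_r$. Therefore
\[
\mathrm{Sing}(F) \;=\; V\!\bigl(\{x_i : i \text{ appears in } I_r\}\bigr)\;\subset\;\mathbb{P}^n,
\]
which is a coordinate linear subspace cut out by exactly $r$ independent linear forms.

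Finally, since a linear subspace of $\mathbb{P}^n$ defined by the simultaneous vanishing of $r$ linearly independent linear forms has codimension exactly $r$ in $\mathbb{P}^n$, we conclude
\[
\sgr(I_r) \;=\; \codim_{\mathbb{P}^n}\mathrm{Sing}(F) \;=\; r.
\]
No serious obstacle is expected here: the computation is essentially tautological once the correct polynomial $F$ is written down. The only subtlety is the standard one of ensuring that the scheme-theoretic singular locus and its reduction have the same codimension, which is immediate because the radical of $(x_0^{d-1},\ldots,x_{r-1}^{d-1})$ is the prime ideal $(x_0,\ldots,x_{r-1})$.
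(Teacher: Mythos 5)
Your proof is correct and follows essentially the same route as the paper: translate $I_r$ to a Fermat-type polynomial, observe that the partial derivatives are $d\,x_i^{d-1}$, and conclude that the singular locus is a coordinate linear subspace of codimension $r$. Your version is just a more careful write-up of the paper's one-line ``direct computation,'' including the (correct) remark that passing to the radical does not change the codimension.
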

\begin{proof}
	By direct computation, the variety $\{x \in \mathbb{P}^n ~\vert ~ dx_1^{d-1} = 0,\ldots, dx_r^{d-1} = 0 \} $ has codimension $r$.
\end{proof}

\begin{thm}\label{thm:bound_sym_subrank}
For any tensor $T \in \sym^d\CC^{n+1}$ we have $\mathrm{Q}_s(T) \leq \sgr(T)$.
\end{thm}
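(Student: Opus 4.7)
The plan is to combine the two ingredients just established, namely the monotonicity of $\sgr$ under the preorder $\leq$ (\Cref{lem:monotone}) and the explicit computation $\sgr(I_r)=r$ (\Cref{lem:sgr identity}). The definition of the symmetric subrank is tailor-made for this: $\mathrm{Q}_s(T)$ is by definition the largest $r$ for which one can realise $I_r$ as $A\cdot T$ for some $A\in M_{n+1}(\CC)$, which is exactly to say $I_r \leq T$ in the sense of \Cref{def:leq}.

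So first I would set $r:=\mathrm{Q}_s(T)$ and invoke the definition to produce a matrix $A\in M_{n+1}(\CC)$ with $A\cdot T = I_r$, so that $I_r \leq T$. Then I would apply \Cref{lem:monotone} to obtain $\sgr(I_r) \leq \sgr(T)$. Finally, \Cref{lem:sgr identity} gives $\sgr(I_r)=r$, and chaining these yields
\[
\mathrm{Q}_s(T) \;=\; r \;=\; \sgr(I_r) \;\leq\; \sgr(T),
\]
which is what we wanted.

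There is essentially no obstacle here; the theorem is a two-line consequence of the preceding lemmas, and the only thing to be careful about is the edge case $\mathrm{Q}_s(T)=0$, where the statement is trivially true since $\sgr(T)\geq 0$ (one can either handle this case separately or observe that $I_0=0$ fits into the same argument with $\sgr(0)=0$). One could also note for completeness that the inequality $\sgr(T)\leq \gr(T)$ from \Cref{lem:SGR_leq_GR} then immediately gives the full chain $\mathrm{Q}_s(T)\leq \sgr(T)\leq \gr(T)$ advertised earlier in the introduction, which is really the point of proving this theorem.
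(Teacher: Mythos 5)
Your proposal is correct and follows essentially the same route as the paper: the paper's proof likewise takes $r=\mathrm{Q}_s(T)$, observes $I_r\leq T$, and chains \Cref{lem:sgr identity} with the monotonicity of \Cref{lem:monotone} to get $r=\sgr(I_r)\leq\sgr(T)$. Your extra remarks about the edge case $r=0$ and the chain with $\gr$ are fine but not needed.
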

\begin{proof}
Assume that $\mathrm{Q}_s(T) = r$ so $I_r \leq T$. Then by \Cref{lem:sgr identity}, $r = \sgr(I_r) \leq \sgr(T)$.\end{proof}

\begin{cor}\label{cor:SGR_matrices}
Let $T\in \sym^2\mathbb{C}^{n+1}$ be a symmetric order 2 tensor, i.e. a symmetric matrix. Then $\sgr(T) = \text{rank}(T)$.
\end{cor}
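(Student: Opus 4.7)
The plan is to compute the singular locus of the associated quadric directly. The degree-two homogeneous polynomial attached to the symmetric matrix $T$ is the quadratic form $F(x) = x^T T x$, and by symmetry of $T$ its gradient is $\nabla F(x) = 2 T x$. Hence the singular locus of $V(F)$ in $\mathbb{P}^n$ is precisely the projectivization of the kernel of $T$,
\[
\mathrm{Sing}(F) = \{[x] \in \mathbb{P}^n : T x = 0\} = \mathbb{P}(\ker T).
\]
Setting $r := \text{rank}(T)$, we have $\dim \ker T = n + 1 - r$, so $\dim \mathbb{P}(\ker T) = n - r$, and therefore $\sgr(T) = n - (n - r) = r$, which is the claim.

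A second, perhaps more idiomatic route that reuses the machinery of Section~\ref{sec:properties} is to observe that over $\mathbb{C}$ any symmetric matrix of rank $r$ is congruent to $I_r = \sum_{i=0}^{r-1} e_i \otimes e_i \in \sym^2 \mathbb{C}^{n+1}$. That is, one can find an invertible $A \in M_{n+1}(\mathbb{C})$ with $I_r = A T A^T = A \cdot T$, and likewise $T = A^{-1} \cdot I_r$. Applying \Cref{lem:monotone} in both directions and combining with \Cref{lem:sgr identity} then immediately yields $\sgr(T) = \sgr(I_r) = r$.

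There is no real obstacle here; the only bookkeeping concerns the two boundary cases. When $T = 0$ the singular locus is all of $\mathbb{P}^n$, giving codimension $0 = \text{rank}(0)$; when $T$ is invertible, $\ker T = \{0\}$ and $\mathrm{Sing}(F)$ is empty, which (with the convention $\dim \emptyset = -1$, or equivalently by working with the affine cone $\ker T \subset \mathbb{C}^{n+1}$ whose codimension is $r$ uniformly in all cases) yields codimension $n+1 = \text{rank}(T)$.
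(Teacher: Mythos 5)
Your proof is correct, but it follows a different route from the paper's. The paper disposes of this corollary in one line by observing that for order-two tensors the symmetric geometric rank coincides with the geometric rank and then citing the fact from Kopparty--Moshkovitz--Zuiddam that the geometric rank of a matrix equals its rank; it does not compute anything directly. Your first argument --- $\nabla(x^T T x) = 2Tx$, so $\mathrm{Sing}(F) = \PP(\ker T)$ has codimension $\mathrm{rank}(T)$ --- is essentially the computation the paper carries out later in the proof of \Cref{prop:sgr_mats}, so it is self-contained and arguably cleaner than an appeal to the external reference; it also handles the boundary cases explicitly, which the paper does not bother with. Your second argument (congruence of a rank-$r$ symmetric matrix over $\CC$ to $I_r$, then \Cref{lem:monotone} applied in both directions together with \Cref{lem:sgr identity}) is a genuinely different and attractive route, since it reuses the section's machinery; just note that with the paper's stated convention $I_r = \sum_{i=0}^{r}(e_i)^{\otimes d}$ the identity tensor has $r+1$ nonzero diagonal entries, so your reindexing $I_r = \sum_{i=0}^{r-1} e_i \otimes e_i$ is the consistent choice and should be flagged as such if you adopt this path. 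Either of your arguments would stand as a valid replacement for the paper's proof.
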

\begin{proof}
One can easily see that for matrices we have $\sgr(T) = \gr(T)$ and \cite[Section 1.2]{kopparty2020geometric} shows that for matrices $\gr(T) = \text{rank}(T)$.
\end{proof}

The above corollary shows that the inequality $\mathrm{Q}_s(T) \leq \sgr(T)$ can be strict. For instance, take the $2n \times 2n$ matrix defined in \cite[Example 2.7]{CFTZsymsub}. Then the symmetric subrank is always $0$, but the symmetric geometric rank will be $2n$.

To summarize the second half of this section, we have that
$$
\mathrm{Q}_s(T) \leq \sgr(T)\leq \gr(T).
$$

\section{Classifying spaces with prescribed symmetric geometric rank}\label{sec:spacesprescribedSGR}
In this section we focus on classifying spaces of tensors with bounded symmetric geometric rank (\Cref{spaces_sgr_r}). As a first example, we consider the case of symmetric matrices, i.e. we describe $\mathcal{S}_{2,r}$. 

\begin{prop}\label{prop:sgr_mats}
For degree two homogeneous polynomials the notion of symmetric geometric rank coincide with the notion of symmetric rank, i.e. 
   $$
   \mathcal{S}_{2,r}=\sigma_r(X_{2}),
   $$  
   where $X_2=\nu_2(\mathbb{P}^n)$ is the $2$nd Veronese variety.
\end{prop}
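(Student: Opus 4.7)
My plan is to exploit the classical fact that for symmetric matrices, symmetric rank, border rank, and usual matrix rank all coincide, and then invoke \Cref{cor:SGR_matrices} to identify symmetric geometric rank with matrix rank in this setting.

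First, I would recall that the $r$th secant variety of the second Veronese $X_2 = \nu_2(\mathbb{P}^n)$ has the very concrete determinantal description
\[
\sigma_r(X_2) = \{[T] \in \mathbb{P}(\sym^2\CC^{n+1}) \mid \operatorname{rank}(T) \leq r\}.
\]
The inclusion $\supset$ follows from the spectral theorem: any symmetric matrix of rank $\leq r$ can be diagonalized and therefore written as a sum of $r$ symmetric rank one tensors $v_i \otimes v_i$, placing it in $\sigma_r(X_2)$. The inclusion $\subset$ is a consequence of the fact that the rank-at-most-$r$ locus is already Zariski closed (cut out by the $(r+1)\times(r+1)$ minors), so no closure is needed beyond the set of sums of $r$ symmetric rank one tensors. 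In particular, border rank equals rank for symmetric matrices.

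Next, by \Cref{cor:SGR_matrices}, for every $T \in \sym^2\CC^{n+1}$ we have $\sgr(T) = \operatorname{rank}(T)$. Combining this with the identification above yields
\[
\mathcal{S}_{2,r} = \{[T] \in \mathbb{P}(\sym^2\CC^{n+1}) \mid \sgr(T) \leq r\} = \{[T] \mid \operatorname{rank}(T) \leq r\} = \sigma_r(X_2),
\]
which is exactly the desired equality.

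Essentially no obstacle remains once \Cref{cor:SGR_matrices} is in hand; the only ingredient to cite carefully is the classical determinantal/spectral description of $\sigma_r(X_2)$, which is well known but worth stating explicitly so that the chain of equalities above is transparent.
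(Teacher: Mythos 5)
Your proof is correct and follows essentially the same route as the paper's: both arguments reduce the statement to the identity $\sgr(T)=\operatorname{rank}(T)$ for symmetric matrices together with the classical identification of $\sigma_r(X_2)$ with the Zariski-closed locus of symmetric matrices of rank at most $r$. The only difference is that the paper re-derives the first identity on the spot from $\nabla F(x)=Ax$ (so that $\mathrm{Sing}(F)=\mathbb{P}(\ker A)$ and $\codim(\mathrm{Sing}(F))=\operatorname{rank}(A)$), whereas you cite \Cref{cor:SGR_matrices}, which is proved earlier and independently, so your use of it is legitimate and not circular.
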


\begin{proof}
    Let $F\in  \mathrm{Sym}^2\mathbb{C}^{n+1}$, since $F(x)=\frac{1}{2}x^TAx$ for some symmetric matrix $A\in M_{n+1}(\mathbb{C})$, then $\nabla F(x)=Ax$.
    Therefore 
    \begin{equation*}
    \dim (\mathrm{Sing}(F))=\dim \{ [x] \in \mathbb{P}^{n} ~\vert ~ \nabla F(x)=0\}=\dim\{ [x] \in \mathbb{P}^{n} ~\vert ~ Ax=0\},
\end{equation*}
i.e. $\codim (\mathrm{Sing}(F))=\mathrm{rank}(A)$ and hence
$
   \mathcal{S}_{2,r}=\sigma_r(X_{2}).
$
\end{proof}

\begin{rem}
It is well known that for matrices the symmetric rank and rank coincide. Therefore, by \Cref{prop:sgr_mats} we have that for order two tensors, the symmetric rank, rank and symmetric geometric rank are all equal. This provides additional evidence that the notion of symmetric geometric rank is natural to consider.
\end{rem}

Now we focus on describing $\mathcal{S}_{d,r}$ when $d\geq 3$ and $r\geq 1$. The following theorem gives a full geometric description of the spaces $\mathcal{S}_{d,r}$ of symmetric tensors with prescribed symmetric geometric rank.

\begin{theorem}\label{thm:full_S_dr}
Let $d\geq 3$, $r\geq 1$, $n\geq 1$ and for an $r$-plane $\ell \subset \mathbb{P}^n$ set $H_\ell:= \{ [F]\in \mathbb{P}(\sym^d\mathbb{C}^{n+1})^* ~\vert ~ V(F) \mbox{ is tangent to } \ell  \}$ . Then
$$
\mathcal{S}_{d,r}=\bigcap_{\ell\subset \mathbb{P}^n}H_\ell,
$$
where the intersection runs over all linear spaces $\ell \subset \mathbb{P}^n$ of dimension $r$.
\end{theorem}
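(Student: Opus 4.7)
The plan is to prove the two inclusions separately, viewing the condition $\sgr(T)\le r$ as $\dim\mathrm{Sing}(F)\ge n-r$ inside $\mathbb{P}^n$.

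For the forward inclusion $\mathcal{S}_{d,r}\subseteq\bigcap_\ell H_\ell$, fix $[F]$ with $\dim\mathrm{Sing}(F)\ge n-r$ and an arbitrary $r$-plane $\ell\subset\mathbb{P}^n$. The projective dimension theorem gives
$$\dim(\mathrm{Sing}(F)\cap\ell)\ge (n-r)+r-n=0,$$
so $\mathrm{Sing}(F)\cap\ell$ is nonempty. Since $F$ is homogeneous, Euler's identity forces $\mathrm{Sing}(F)\subseteq V(F)$; picking $p\in\mathrm{Sing}(F)\cap\ell$, the tangent cone to $V(F)$ at $p$ is all of $\mathbb{P}^n$, so $V(F)$ is tangent to $\ell$ at $p$, hence $[F]\in H_\ell$.

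For the reverse inclusion, I would argue by contrapositive: assume $\sgr(T)>r$, so $\dim\mathrm{Sing}(F)<n-r$, and produce an $\ell\in\mathbb{G}(r,n)$ that is not tangent to $V(F)$. Here \emph{tangent} means there is a point $p\in\ell\cap V(F)$ at which the intersection is non-transverse, i.e.\ either $p\in\mathrm{Sing}(F)$ or $p$ is smooth with $\ell\subseteq T_pV(F)$. Accordingly, decompose the tangent locus as $\mathcal{T}_F=\mathcal{T}_F^{\mathrm{sing}}\cup\mathcal{T}_F^{\mathrm{sm}}$ inside $\mathbb{G}(r,n)$, and bound each piece using an incidence correspondence. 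For $\mathcal{T}_F^{\mathrm{sing}}$, use $I_1=\{(p,\ell):p\in\mathrm{Sing}(F),\,p\in\ell\}$; the fiber over a point is the variety of $r$-planes through it, of dimension $r(n-r)$, so
$$\dim\mathcal{T}_F^{\mathrm{sing}}\le \dim\mathrm{Sing}(F)+r(n-r)<(n-r)+r(n-r)=(r+1)(n-r)=\dim\mathbb{G}(r,n).$$
For $\mathcal{T}_F^{\mathrm{sm}}$, use $I_2=\{(p,\ell):p\in V(F)_{\mathrm{sm}},\,p\in\ell\subseteq T_pV(F)\}$; since $V(F)$ is a hypersurface, $T_pV(F)$ is an $(n-1)$-plane, and the set of $r$-planes through $p$ inside $T_pV(F)$ has dimension $r(n-1-r)$, giving
$$\dim\mathcal{T}_F^{\mathrm{sm}}\le (n-1)+r(n-1-r)=(r+1)(n-r)-1<\dim\mathbb{G}(r,n).$$
Because $\mathbb{G}(r,n)$ is irreducible, a generic $\ell$ avoids $\mathcal{T}_F$, proving $[F]\notin\bigcap_\ell H_\ell$.

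The main obstacle is the second dimension count: one must interpret \emph{tangency} correctly so that the smooth-tangent locus is genuinely cut out by a $(-1)$-dimensional correction, which relies on $V(F)$ being a hypersurface (so tangent spaces at smooth points have codimension exactly one). A few degenerate situations must also be handled separately: if $F$ is non-reduced so that $\mathrm{Sing}(F)=V(F)$ has codimension one, then $\sgr(T)=1\le r$ and the forward direction already applies; and the boundary case $r=n$ collapses $\mathbb{G}(r,n)$ to a point, where the smooth piece $\mathcal{T}_F^{\mathrm{sm}}$ is empty and the statement reduces to the trivial equivalence $\mathrm{Sing}(F)\ne\emptyset\Leftrightarrow\mathbb{P}^n$ is tangent to $V(F)$.
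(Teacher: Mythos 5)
Your proof is correct, and while the forward inclusion is the same dimension-theorem argument as the paper's, your reverse inclusion takes a genuinely different route. The paper argues by contradiction: since $\dim\mathrm{Sing}(F)\le n-r-1$, a general $r$-plane misses $\mathrm{Sing}(F)$, and then it invokes Bertini's theorem to conclude that a general $\ell$ meeting $V(F)$ only in smooth points cannot be tangent. You instead prove that generic-transversality statement by hand, via the two incidence correspondences $I_1$ and $I_2$ over $\mathbb{G}(r,n)$: the count $\dim\mathcal{T}_F^{\mathrm{sing}}\le(n-r-1)+r(n-r)$ and $\dim\mathcal{T}_F^{\mathrm{sm}}\le(n-1)+r(n-1-r)=(r+1)(n-r)-1$ shows the tangent locus is a proper (constructible) subset of the irreducible Grassmannian, so a generic $\ell$ avoids it. This is longer but more self-contained and makes explicit exactly which form of ``Bertini'' is being used, together with where the hypersurface hypothesis enters (the codimension-one tangent space at smooth points is what buys the $-1$ in the second count); it also cleanly isolates the boundary case $r=n$, which the paper's appeal to Bertini glosses over. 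Two cosmetic points: at a singular point $p$ the relevant object is the embedded Zariski tangent space (which is all of $\mathbb{P}^n$), not the tangent cone, which for a hypersurface is a cone of degree equal to the multiplicity; and for non-reduced $F$ one only has $\mathrm{Sing}(F)\supseteq$ the non-reduced components (codimension one), not $\mathrm{Sing}(F)=V(F)$ --- but codimension one is all you need there, so neither remark affects the argument.
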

\begin{proof}First we show that $\mathcal{S}_{d,r}\subseteq \bigcap_{\ell\subset \mathbb{P}^n}H_\ell$.
Let $[F]\in \mathcal{S}_{d,r}$, so $\dim (\mathrm{Sing}(F))\geq n-r$ and hence for any $r$-plane $\ell\subset \mathbb{P}^n$ we can find a point in the intersection $\mathrm{Sing}(F)\cap \ell $ which is a tangent point.

For the other direction, let $[F]\in \bigcap_{\ell \subset \mathbb{P}^n} H_\ell$ and assume by contradiction that $[F]\not\in~\mathcal{S}_{d,r}$. Therefore $\dim( \mathrm{Sing}(F))\leq n-( r+1)$ and the general $r$-plane $\ell $ does not intersect $\mathrm{Sing}(F)$, which means that $V(F)$ and $\ell$ intersect away from the singular locus of $V(F)$. But then by Bertini Theorem we get that $V(F)$ cannot be tangent to $\ell$ which is a contradiction. 
\end{proof}

The above theorem 
gives a complete geometric description of 
the spaces $\mathcal{S}_{d,r}$. We now wish to bound for the dimension of these varieties. In order to do so, notice that for all $r=0,\dots,n$ $$\Bigg[\sum_{i=0}^r x_i^d \Bigg]\in \mathcal{S}_{d,r+1}.$$
This idea can be easily generalized by considering cones, i.e. hypersurfaces in $\mathbb{P}^n$ whose defining equation depends on less than $n+1$ variables.

\begin{rem}\label{rem:secants}
For all $[F]\in \mathbb{P}(\sym^d\mathbb{C}^{n+1})^* $ such that $F=F(x_0,\dots,x_r)$ we have $\codim (\mathrm{Sing}(F))\geq n-(r+1)$. Therefore, we get the following lower bound on the dimension of $\mathcal{S}_{d,r+1}$:
   $$
   \dim (\mathcal{S}_{d,r+1})\geq { d+r \choose r}-1.
   $$
\end{rem}

\subsection{The space $\mathcal{S}_{d,1}$}
We now focus on symmetric tensors with symmetric geometric rank one by first understanding the shape of an element in $\mathcal{S}_{d,1}$.
\begin{lem}\label{lem:Sd1}
	Let $d\geq 2$ and take $[F] \in \mathcal{S}_{d,1}$. Then $F = G^i \cdot H$ with $i \geq 2$, for polynomials $G,H$ such that $i \cdot \deg(G) + \deg(H) = d$.
\end{lem}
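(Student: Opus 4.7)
The plan is to analyze the singular locus of the hypersurface $V(F)$ via the irreducible factorization of $F$, and to show that the dimension bound $\dim \mathrm{Sing}(F) \geq n-1$ forces at least one irreducible factor to appear with multiplicity $\geq 2$.

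First, I would write $F = G_1^{m_1} \cdots G_s^{m_s}$ as the factorization of $F$ into distinct irreducible homogeneous factors, so that $V(F) = V(G_1) \cup \cdots \cup V(G_s)$ with each $V(G_j)$ an irreducible hypersurface of dimension $n-1$. Since $\mathrm{Sing}(F) \subseteq V(F)$ and $[F] \in \mathcal{S}_{d,1}$ gives $\dim \mathrm{Sing}(F) \geq n-1$, the singular locus must contain an $(n-1)$-dimensional irreducible closed subset of $V(F)$, which must be an entire component $V(G_j)$.

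Next, I would determine precisely which components $V(G_j)$ are contained in $\mathrm{Sing}(F)$ via a product rule computation. Writing $F = G_j^{m_j} R$ with $R = \prod_{l\neq j} G_l^{m_l}$, for $p$ a generic point of $V(G_j)$ (lying only on $V(G_j)$ and a smooth point of it) one has
\[
\frac{\partial F}{\partial x_k}(p) = m_j\, G_j(p)^{m_j-1} R(p)\, \frac{\partial G_j}{\partial x_k}(p) + G_j(p)^{m_j}\, \frac{\partial R}{\partial x_k}(p).
\]
If $m_j \geq 2$, both terms vanish identically on $V(G_j)$, so $V(G_j) \subseteq \mathrm{Sing}(F)$. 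If $m_j = 1$, the expression restricted to $V(G_j)$ becomes $R(p)\,\nabla G_j(p)$, which is nonzero for generic $p$ because $R(p)\neq 0$ away from the other components and $\nabla G_j(p)\neq 0$ on the smooth locus of the irreducible hypersurface $V(G_j)$; hence $V(G_j) \not\subseteq \mathrm{Sing}(F)$. Combining these cases shows that $V(G_j) \subseteq \mathrm{Sing}(F)$ if and only if $m_j \geq 2$.

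Setting $G := G_j$, $i := m_j \geq 2$, and $H := F/G^i$ then yields the desired factorization $F = G^i H$ with $i\deg(G) + \deg(H) = d$. The main subtlety, rather than any real obstacle, is justifying that the other contributions to $\mathrm{Sing}(F)$ --- namely the pairwise intersections $V(G_j)\cap V(G_l)$ for $j\neq l$ and the singular loci $\mathrm{Sing}(V(G_j))$ of the individual irreducible components --- all have dimension at most $n-2$, so that they cannot by themselves account for a component of $\mathrm{Sing}(F)$ of dimension $n-1$; this is standard since distinct irreducible hypersurfaces in $\mathbb{P}^n$ meet in codimension at least $2$, and any irreducible variety has singular locus of strictly smaller dimension.
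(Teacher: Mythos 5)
Your proof is correct and follows essentially the same route as the paper's: the paper's one-line argument is precisely that $\mathrm{Sing}(F)$, having codimension one, must contain a hypersurface component of $V(F)$, whose defining polynomial must then divide $F$ with multiplicity. Your version simply makes this rigorous by working through the irreducible factorization and the product-rule computation showing $V(G_j)\subseteq\mathrm{Sing}(F)$ if and only if $m_j\geq 2$, which is a faithful (and more complete) elaboration of the paper's intended argument.
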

\begin{proof}
    Since $\text{Sing}(F)$ must be a hypersurface in $\mathbb{P}^n$, the polynomial defining the hypersurface $\text{Sing}(F)$ must appear with multiplicity in $F$.
\end{proof}

In this particular case of $r=1$, we can prove the specific version of \Cref{thm:full_S_dr} in the following more elementary way.

\begin{thm}\label{thm:S_{3,1}}
 Let $H_\ell = \{ [F] \in \PP (\sym^d \CC^{n+1})^*~\vert ~ V(F)$ is tangent to a line $\ell \subset \PP^n$ \}. Then 
 \begin{equation}\label{eq:space_general_S_1}
 \bigcap_{\ell \subset \PP^n} H_\ell = \mathcal{S}_{d,1}.
 \end{equation}
\end{thm}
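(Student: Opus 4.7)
The plan is to prove the two inclusions separately, leveraging the structural description of $\mathcal{S}_{d,1}$ furnished by \Cref{lem:Sd1}.

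For the forward inclusion $\mathcal{S}_{d,1}\subseteq\bigcap_{\ell}H_\ell$, I would take $[F]\in\mathcal{S}_{d,1}$ and apply \Cref{lem:Sd1} to write $F=G^iH$ with $i\geq 2$. A direct differentiation shows that every partial derivative of $F$ is divisible by $G$, so $V(G)\subseteq\mathrm{Sing}(V(F))$. For any line $\ell\subset\PP^n$, the hypersurface $V(G)$ meets $\ell$ (every line in projective space meets every hypersurface by B\'ezout), so picking any $p\in\ell\cap V(G)$ produces a singular point of $V(F)$ lying on $\ell$. By definition $V(F)$ is then tangent to $\ell$ at $p$, so $[F]\in H_\ell$, and this holds for every line.

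For the reverse inclusion I would argue by contrapositive: assuming $[F]\notin\mathcal{S}_{d,1}$, so that $\dim\mathrm{Sing}(V(F))\leq n-2$, the goal is to exhibit a single line $\ell$ to which $V(F)$ is not tangent. I would do this by a dimension count inside the Grassmannian of lines in $\PP^n$, which has dimension $2n-2$. The tangent-line locus decomposes as $T_1\cup T_2$, where $T_1$ consists of lines meeting $\mathrm{Sing}(V(F))$ and $T_2$ of lines $\ell$ contained in the tangent hyperplane $T_pV(F)$ at some smooth point $p\in\ell\cap V(F)$. The incidence variety $\{(p,\ell):p\in\mathrm{Sing}(V(F)),\ p\in\ell\}$ fibers over $\mathrm{Sing}(V(F))$ with fiber $\PP^{n-1}$ (lines through a fixed point), giving $\dim T_1\leq (n-2)+(n-1)=2n-3$; similarly $T_2$ is the image of an incidence variety fibered over $V(F)^{\mathrm{sm}}$ of dimension $n-1$ with fiber the $\PP^{n-2}$ of lines through $p$ inside $T_pV(F)$, so $\dim T_2\leq (n-1)+(n-2)=2n-3$. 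Hence $T_1\cup T_2$ is a proper subvariety of the Grassmannian and any line in its complement witnesses $[F]\notin H_\ell$.

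The main obstacle I anticipate is in the reverse direction, specifically in justifying the dimension bound on $T_2$. The argument hinges on $V(F)$ being reduced---equivalently on the converse of \Cref{lem:Sd1}, which follows by the same differentiation---so that $V(F)^{\mathrm{sm}}$ actually has the expected dimension $n-1$. A more conceptual but less elementary alternative would be to invoke Bertini's theorem on a generic pencil of hyperplane sections to produce a line meeting $V(F)$ transversely in $d$ distinct smooth points, as is done in the proof of \Cref{thm:full_S_dr}; the dimension count above keeps the argument self-contained and matches the ``more elementary'' spirit announced just before the theorem statement.
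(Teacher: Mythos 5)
Your proof is correct, and while the forward inclusion is essentially the paper's argument (invoke \Cref{lem:Sd1} to get $F=G^iH$, observe $V(G)\subseteq\mathrm{Sing}(F)$, and note every line meets $V(G)$), your reverse inclusion takes a genuinely different route. The paper argues directly: for $[F]\in\bigcap_\ell H_\ell$ and a line $\ell\not\subseteq V(F)$, some intersection point has multiplicity $\geq 2$, and the paper then asserts that $V(F)$ is singular at that point, concluding that every line meets $\mathrm{Sing}(F)$ and hence $\codim(\mathrm{Sing}(F))\leq 1$. As literally stated that intermediate assertion is too strong --- a line can meet a hypersurface with multiplicity two at a smooth point (any tangent line of a smooth quadric) --- so the paper's reverse direction implicitly relies on the fact that tangency at smooth points cannot account for \emph{all} lines, which is exactly what your Grassmannian dimension count makes explicit: lines through $\mathrm{Sing}(F)$ sweep out at most $2n-3$ dimensions when $\codim(\mathrm{Sing}(F))\geq 2$, and lines tangent at a smooth point likewise at most $(n-1)+(n-2)=2n-3<2n-2=\dim\mathrm{Gr}(1,n)$, so a non-tangent line exists. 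Your count buys a self-contained, rigorous contrapositive that avoids both the paper's elision and the Bertini argument of \Cref{thm:full_S_dr}. One small remark: your worry about reducedness of $V(F)$ for the bound on $T_2$ is not actually needed --- only the upper bound $\dim V(F)^{\mathrm{sm}}\leq n-1$ enters, which holds for any nonzero $F$; in any case $\codim(\mathrm{Sing}(F))\geq 2$ already forces $F$ to be squarefree, so the point is moot.
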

\begin{proof}
    First we show $\mathcal{S}_{d,1} \subseteq \bigcap_{\ell \subset \PP^n} H_\ell$. Let $[F]\in \mathcal{S}_{d,1}$. By \Cref{lem:Sd1}, we can write $F = G^i H$ for polynomials $G,H \in \CC[x_0,\ldots,x_n]$ and $i \geq 2$. This implies that $V(G) \subseteq \text{Sing}(F)$ for all lines $\ell \subset \PP^n$. Therefore, there exists a point $p \in \text{Sing}(F) \cap \ell$ with multiplicity greater than or equal to $2$, meaning that $V(F)$ is tangent to $\ell$ at $p$.

    For the reverse inclusion, let $[F] \in \bigcap_{\ell \subset \PP^n} H_\ell$. Note that $\ell \subseteq V(F)$ for all lines $\ell \subset \PP^n$ if and only if $F$ is identically zero, so we can assume $\ell$ is not contained in $V(F)$. This means $\#(\ell \cap V(F))= \{p_1,\ldots,p_d\}$ and, without loss of generality, we can assume $p_1$ has multiplicity greater than or equal to $2$. This implies that $V(F)$ is singular at $p_1$. Since $V(F)$ must be singular at a point for every line $\ell \subset \PP^n$, this implies that $\text{Sing}(F) \cap \ell \neq \emptyset$ for every line $\ell$, therefore the codimension of the singular locus of $V(F)$ must be less than or equal to $1$. 
\end{proof}
We point out that when $d=3$ the space \eqref{eq:space_general_S_1} also appears in problems in enumerative geometry where one wishes to compute the number of cubic hypersurfaces
tangent to a given number of lines \cite{BDFK}.
Moreover, for the particular case of $d=3$ one can see that 
the space $\mathcal{S}_{3,1}$ is actually the tangential variety of the third Veronese variety. 
\begin{thm}
\label{prop:SGR1_tangential}
The space of 3-ways symmetric tensors having symmetric geometric rank one is 
$$
\mathcal{S}_{3,1}=\tau(X_3).
$$
\end{thm}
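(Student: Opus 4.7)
The plan is to establish the double inclusion $\mathcal{S}_{3,1} \subseteq \tau(X_3)$ and $\tau(X_3) \subseteq \mathcal{S}_{3,1}$ as sets (both are closed subvarieties of $\PP(\sym^3\CC^{n+1})^*$, so set-equality gives variety-equality). The two main ingredients are \Cref{lem:Sd1}, which constrains the algebraic shape of any $F$ with $[F]\in \mathcal{S}_{3,1}$, and \Cref{rem:tangential_LM^2}, which identifies elements of $\tau(X_3)$ with classes of the form $[L^2 M]$ for linear forms $L, M$. Once one sees that, for $d=3$, the multiplicity-$\geq 2$ factor forced by \Cref{lem:Sd1} must itself be a linear form, the two descriptions match exactly.

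For the inclusion $\mathcal{S}_{3,1} \subseteq \tau(X_3)$, I would take $[F]\in \mathcal{S}_{3,1}$ and apply \Cref{lem:Sd1} to write $F = G^i H$ with $i\geq 2$ and $i\cdot \deg(G)+\deg(H)=3$. The only numerical solutions are $(i,\deg G, \deg H)=(2,1,1)$, giving $F=L^2 M$ with $L,M$ linear, or $(i,\deg G,\deg H)=(3,1,0)$, giving $F$ proportional to $L^3$. In the second case we can still write $F = L^2\cdot L$, i.e.\ $L^2 M$ with $M=L$, so in both cases $[F]\in \tau(X_3)$ by \Cref{rem:tangential_LM^2}.

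For the reverse inclusion $\tau(X_3)\subseteq \mathcal{S}_{3,1}$, I would take $[F]=[L^2 M]\in \tau(X_3)$ with $L,M$ nonzero linear forms and compute the gradient directly:
\[
\nabla F \;=\; 2LM\,\nabla L + L^2\,\nabla M \;=\; L\bigl(2M\,\nabla L + L\,\nabla M\bigr).
\]
Every partial derivative therefore vanishes along $V(L)$, so the hyperplane $V(L)\subseteq \mathbb{P}^n$ is contained in $\mathrm{Sing}(F)$. Hence $\codim(\mathrm{Sing}(F))\leq 1$, i.e.\ $\sgr(F)\leq 1$, which means $[F]\in \mathcal{S}_{3,1}$.

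The main obstacle is essentially bookkeeping rather than geometry: one has to be careful to treat the degenerate possibility $F=cL^3$ within the $L^2 M$ parametrization of $\tau(X_3)$ (it corresponds to the diagonal $L=M$, i.e.\ to the points of $X_3\subseteq \tau(X_3)$) and to note that the case of nonzero $L,M$ is the only meaningful one projectively. Everything else follows from the strong structural constraint imposed by \Cref{lem:Sd1} for $d=3$, where the low degree forces the multiplicity-$\geq 2$ factor to be linear.
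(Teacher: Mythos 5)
Your proof is correct and follows essentially the same route as the paper: a double inclusion in which $\tau(X_3)\subseteq \mathcal{S}_{3,1}$ is obtained by computing the gradient of $L^2M$ and observing that $V(L)\subseteq \mathrm{Sing}(F)$, and the reverse inclusion by showing a codimension-one singular locus forces the factorization $F=L^2M$. The only cosmetic differences are that you invoke \Cref{lem:Sd1} directly (where the paper re-derives the repeated-factor argument for cubics) and you settle for $\codim(\mathrm{Sing}(F))\leq 1$ rather than computing the singular ideal $(L^2,LM)$ exactly; both suffice.
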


\begin{proof}
 We first prove that if $[T]\in \tau(X_3)$ then $\sgr(T)=1$. Let $F$ be the form corresponding to a tensor in $\tau(X_3)\setminus X_3$, i.e. $F=L^2M$, where $L=a_0x_0+\cdots +a_{n}x_{n}$ and $M=b_0x_0+\cdots +b_{n}x_{n}$ where vectors $(a_0,\ldots,a_n)$ and $(b_0,\ldots,b_n)$ are not proportional (cf. \Cref{rem:tangential_LM^2}). Notice that $$ \frac{ \partial F}{\partial x_i}=\frac{ \partial L^2M}{\partial x_i}=2a_iLM+b_iL^2,
$$
hence the ideal generating the singular locus of $V(F)$ is given by
$$
( 2a_0LM+b_0L^2,\dots,  2a_{n}LM+b_{n}L^2)=(L^2,LM),
$$
where the second equality follows since the coefficients of $L$ and $M$ are not proportional. The conclusion follows since the radical of the above ideal is generated by~$L$.

To show the reverse inclusion, it suffices to show that any homogeneous cubic polynomial, $F \in \CC[x_0,\ldots,x_{n}]_3$, whose corresponding hypersurface has a singular locus of codimension one, is of the form $F = L^2 M$ where $L = a_0x_0 + \ldots a_{n} x_{n}$ and $M = b_0x_0 + \ldots + b_{n} x_{n}$. Since $V(F)\subset \mathbb{P}^n$ is a hypersurface, for its singular locus to also have codimension one in $\mathbb{P}^n$, $F$ must be reducible. Therefore $F = P \cdot Q$ for homogeneous polynomials $P,Q \in \CC[x_0,\ldots,x_{n}]$. The singular locus of $V(F)$ then contains $V(P) \cap V(Q)$. In other words, if the codimension of the singular locus of $V(F)$ is one, then the codimension of $V(P) \cap V(Q)$ must be one. Therefore, $P$ and $Q$ must contain a common factor. Since $\deg(F) = 3$, the only way for this to happen is if $Q = L$ and $P = L \cdot M$ for linear forms $L,M$. 
\end{proof}

Combining both \Cref{thm:S_{3,1}} and \Cref{prop:SGR1_tangential} allows us to give another interpretation of the tangential variety of the third Veronese variety. This is a straightforward consequence of the aforementioned theorems, 
but for the sake of completeness we include it here.
\begin{cor}The following equality holds for the tangential variety $\tau(X_3) $ of the third Veronese variety $X_3$
    $$
    \tau(X_3)=\bigcap_{\ell \subset \PP^n} H_\ell = \mathcal{S}_{3,1}
$$
where $H_\ell = \{ [F] \in \PP (\sym^d \CC^{n+1})^*~\vert ~ V(F) \hbox{ is tangent to a line } \ell \subset \PP^n\}$ and $\mathcal{S}_{3,1}$ is the space of cubic polynomials with singular locus of codimension one in $\PP^n$. 
\end{cor}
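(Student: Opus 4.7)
The plan is very short: the corollary is a direct concatenation of two equalities already proved just above it, so no new geometric input is required.

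First I would specialize \Cref{thm:S_{3,1}} to the case $d=3$. That theorem asserts
\[
\bigcap_{\ell \subset \PP^n} H_\ell = \mathcal{S}_{d,1},
\]
where the intersection is over all lines $\ell \subset \PP^n$ and $H_\ell$ collects those $[F]$ for which $V(F)$ is tangent to $\ell$. Setting $d=3$ gives the second equality in the corollary, namely $\bigcap_{\ell} H_\ell = \mathcal{S}_{3,1}$.

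Next I would invoke \Cref{prop:SGR1_tangential}, which provides the geometric identification $\mathcal{S}_{3,1} = \tau(X_3)$ of the space of ternary cubic-locus tensors of symmetric geometric rank one with the tangential variety of the third Veronese. Chaining this with the previous display yields
\[
\tau(X_3) = \mathcal{S}_{3,1} = \bigcap_{\ell \subset \PP^n} H_\ell,
\]
which is exactly the claim. Since both equalities are already in hand, there is no real obstacle; the only thing to check is that the definition of $H_\ell$ used in the corollary matches the one in \Cref{thm:S_{3,1}} (lines, i.e.\ $1$-planes, which is the $r=1$ case of the more general \Cref{thm:full_S_dr}), which it does verbatim. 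For this reason I would keep the proof to a single sentence that cites the two theorems and concludes by transitivity of equality.
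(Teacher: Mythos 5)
Your proposal is correct and matches the paper exactly: the corollary is stated there as a straightforward consequence of combining \Cref{thm:S_{3,1}} (specialized to $d=3$) with \Cref{prop:SGR1_tangential}, which is precisely your two-step chain of equalities. No further argument is needed.
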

Continuing to work with $d=3$, we can understand the relation between higher secant varieties of the tangential variety and the spaces of tensors with prescribed symmetric geometric rank. 
\begin{prop}\label{prop:SGR_secant_tangential}
	Let $\sigma_s(\tau(X_3))$ be the $r$th secant variety of the tangential variety $\tau(X_3)$ of the third Veronese variety $ X_3\subset  \mathbb{P}^{{{n+3} \choose 3}-1}$. A general tensor $[F] \in \sigma_r (\tau (X_{3}))$ has symmetric geometric rank $r$.
\end{prop}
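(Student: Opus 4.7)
The plan is to prove the equality by establishing matching upper and lower bounds on a Zariski dense open subset of $\sigma_r(\tau(X_3))$. This variety is irreducible, as the secant variety of the irreducible tangential variety $\tau(X_3)$, which itself is irreducible because it is the image of the tangent bundle of the smooth Veronese variety $X_3$. The key ingredients are Lemma 3.3 (subadditivity), Theorem 3.7 (which identifies $\tau(X_3)$ with $\mathcal{S}_{3,1}$), and the upper semicontinuity of $\dim \mathrm{Sing}(F)$ in the coefficients of $F$.

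For the upper bound, a general $[F] \in \sigma_r(\tau(X_3))$ admits a decomposition $F = \sum_{i=1}^r L_i^2 M_i$ for linear forms $L_i, M_i$, by Remark 2.17 applied to each summand. Each summand satisfies $\sgr(L_i^2 M_i) \leq 1$ since $[L_i^2 M_i] \in \tau(X_3) = \mathcal{S}_{3,1}$, so applying Lemma 3.3 inductively yields $\sgr(F) \leq r$ on this dense open subset. Because $\{\sgr \leq r\}$ is closed by semicontinuity, the bound then extends to all of $\sigma_r(\tau(X_3))$.

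For the lower bound, I would exhibit an explicit witness, namely the Fermat cubic $F_0 = \sum_{i=0}^{r-1} x_i^3$. Since each $[x_i^3] \in X_3 \subseteq \tau(X_3)$, we have $F_0 \in \sigma_r(X_3) \subseteq \sigma_r(\tau(X_3))$, and a direct computation of partial derivatives gives $\mathrm{Sing}(F_0) = \{[x] \in \mathbb{P}^n : x_0 = \cdots = x_{r-1} = 0\}$, a linear subspace of codimension $r$, so $\sgr(F_0) = r$. Since the condition $\sgr(F) \geq r$ is open by semicontinuity and is satisfied at $F_0$, irreducibility of $\sigma_r(\tau(X_3))$ forces it to hold on a dense open subset. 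Combining with the upper bound gives $\sgr(F) = r$ on a dense open subset of $\sigma_r(\tau(X_3))$, which is the statement.

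The main obstacle is the lower bound, since subadditivity never produces one on its own. A direct analysis of $\mathrm{Sing}(\sum L_i^2 M_i)$ for generic linear forms would quickly become unwieldy due to cross terms in $\nabla F$; the Fermat cubic $F_0$ circumvents this by simultaneously belonging to $\sigma_r(\tau(X_3))$ and having a transparent linear singular locus of the required codimension, reducing the global lower bound to a single semicontinuity argument.
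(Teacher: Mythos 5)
Your proof is correct, and it diverges from the paper's in a way worth noting. The paper's argument also starts from the generic decomposition $F=\sum_{i=1}^r L_i^2M_i$, gets the upper bound by observing that every partial derivative of $F$ lies in the ideal $(L_1,\dots,L_r)$ (so $V(L_1,\dots,L_r)\subseteq \mathrm{Sing}(F)$ and $\sgr(F)\leq r$), and then simply asserts that the ideal of partials \emph{equals} $(L_1,\dots,L_r)$, which is what delivers the lower bound; that equality (really an equality of zero sets for generic $L_i,M_i$) is not justified and is the delicate point, since the partials are quadrics and one must check that the $n+1$ generic combinations of $L_iM_i,L_i^2$ cut out nothing beyond $V(L_1,\dots,L_r)$. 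You obtain the upper bound more softly, via $\tau(X_3)=\mathcal{S}_{3,1}$ (\Cref{prop:SGR1_tangential}) and subadditivity (\Cref{lem:subadditive}), and — more importantly — you replace the unjustified ideal computation by a clean specialization argument: the Fermat cubic $\sum_{i=0}^{r-1}x_i^3$ lies in $\sigma_r(X_3)\subseteq\sigma_r(\tau(X_3))$ and has $\sgr$ exactly $r$, and since $\mathcal{S}_{3,r-1}$ is Zariski closed and $\sigma_r(\tau(X_3))$ is irreducible, the condition $\sgr\geq r$ holds on a dense open subset. This is arguably more rigorous than the paper's proof of the lower bound, at the mild cost of invoking the closedness of the strata $\mathcal{S}_{3,r}$ (which the paper establishes in the introduction) and the irreducibility of the secant variety. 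Both arguments share the same implicit restriction $r\leq n+1$ (your witness needs $r$ coordinates, and the statement itself fails for larger $r$), so that is not a defect of your proof relative to the paper's.
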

\begin{proof}
	A general element of $\sigma_r(\tau(X_{3}))$ is of the form $F = \sum_{i=1}^r L_i^2 M_i$, for linear forms $L_i,M_i$. Taking the partial derivatives we have $\sgr(T) \leq r$. Indeed, let $I$ be the ideal generating the singular locus of $F$ i.e. $I = (\frac{\partial F}{\partial x_i})_{i=0}^{n}$. Then $I = ( L_1,\ldots, L_r )$.
\end{proof}
\begin{rem}\label{rem:secanttau_vs_S_r}
The $r$th secant variety of the tangential variety of the third Veronese $X_3$ is defective if and only if $r=n=2,3$ and $4$ (cf. \cite[Thoerem 1.1]{AN18}, also \cite{CGG02}).
Hence, in the non defective cases $$\dim \sigma_r(\tau(X_3))=\min \biggl\{ r(2n+1)-1, {{n+3} \choose 3}-1\biggr\}. $$
Moreover, 
$$
\sigma_r(\tau(X_3))\subset \mathcal{S}_{3,r}
$$
and the containment is strict since $\mathcal{S}_{3,r}$ fills the ambient space for $r=n+1$, while we should set $r\simeq n^2$ to get that $\sigma_r(\tau(X_3))$ fills the ambient space.
\end{rem}

Since we are connecting spaces of symmetric tensors with prescribed symmetric geometric rank with secant varieties, it is worth to also see the interplay between $\mathcal{S}_{3,r}$ and secant varieties of the third Veronese varieties. 
\begin{rem}
Since $X_3\subset \tau(X_3)$ then $\sigma_r(X_3)\subset \sigma_r(\tau(X_d))$ and by \Cref{prop:SGR_secant_tangential} we have $ \sigma_r(X_3)\subset \mathcal{S}_{3,r}$.
\end{rem}

We now completely describe the space $\mathcal{S}_{d,1}$ for all $d \geq 3$. 
\begin{thm}\label{thm:complete_S_k1}
	Fix $d\geq 3$. The space of $d$-factor symmetric tensors with symmetric geometric rank at most 1 can be characterized as follows.
	\begin{enumerate}[label=\arabic*.]\setlength\itemsep{.2em}
		\item If $d=3$ then $\mathcal{S}_{d,1}=\tau(X_3)$ and $\dim (\mathcal{S}_{d,1})=2n$.
		\item Otherwise, $\mathcal{S}_{d,1}$ is reducible and has a component with maximal dimension 
  $$
{{\lfloor \frac{d}{2} \rfloor  +n} \choose \lfloor \frac{d}{2} \rfloor }+ {{d-\lfloor \frac{d}{2} \rfloor+n} \choose {d - \lfloor \frac{d}{2} \rfloor}}-2.
$$
 Moreover, each component is given by the image of the map
  \begin{align*}
   	\phi_{d_1}: \PP (\sym^{d_1}\CC^{n+1})^* &\times  \,\PP (\sym^{d_2}\CC^{n+1})^* \longrightarrow \PP (\sym^{d}\CC^{n+1})^*\\
   	(\,[F]&\,,\,[G]\,) \quad \mapsto \qquad [F^{\,2} \cdot G],
   \end{align*}
   where $d_2 = d - 2d_1$. Finally, all components intersect on the $d$th Veronese $X_d$.
	\end{enumerate}
\end{thm}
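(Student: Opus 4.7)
The argument naturally splits according to whether $d = 3$ or $d \geq 4$. For $d = 3$, the identification $\mathcal{S}_{3,1} = \tau(X_3)$ is precisely \Cref{prop:SGR1_tangential}. The dimension $\dim \tau(X_3) = 2n$ follows from \Cref{rem:tangential_LM^2}: a general element of $\tau(X_3)$ is $[L^2 M]$ for linearly independent forms $L, M$, and the parametrization $\PP^n \times \PP^n \to \tau(X_3)$, $(L,M) \mapsto L^2 M$, is generically finite because unique factorization in $\CC[x_0,\ldots,x_n]$ recovers $(L,M)$ from $L^2 M$ up to scalars.

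For $d \geq 4$ the plan has four steps. First, \Cref{lem:Sd1} lets one write any $[F] \in \mathcal{S}_{d,1}$ as $F = G^i H$ with $i \geq 2$; regrouping $F = G^2 \cdot (G^{i-2} H)$ realizes $[F]$ as $\phi_{d_1}([G], [G^{i-2} H])$ where $d_1 = \deg G \in \{1, \ldots, \lfloor d/2 \rfloor\}$. Conversely, a direct partial-derivative computation shows that any $F^2 G$ satisfies $V(F) \subseteq \mathrm{Sing}(V(F^2 G))$ and therefore lies in $\mathcal{S}_{d,1}$. Together this gives the set-theoretic decomposition
\[
\mathcal{S}_{d,1} \;=\; \bigcup_{d_1 = 1}^{\lfloor d/2 \rfloor} \overline{\mathrm{Im}\,\phi_{d_1}},
\]
and each summand is irreducible as the closure of the image of an irreducible variety under a morphism.

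To see that these summands are pairwise distinct irreducible components, I would argue by unique factorization: for a generic input $([F],[G]) \in \PP(\sym^{d_1}\CC^{n+1})^* \times \PP(\sym^{d-2d_1}\CC^{n+1})^*$, the factor $F$ is irreducible, $G$ is square-free, and $\gcd(F,G) = 1$. For such inputs the only irreducible factor of $F^2 G$ appearing with multiplicity $\geq 2$ is $F$, so $d_1$ is intrinsically determined by $[F^2 G]$. This simultaneously forces $\phi_{d_1}$ to be generically injective, so that
\[
\dim \overline{\mathrm{Im}\,\phi_{d_1}} \;=\; \binom{d_1+n}{d_1} + \binom{d-2d_1+n}{d-2d_1} - 2,
\]
and precludes any containment $\overline{\mathrm{Im}\,\phi_{d_1}} \subseteq \overline{\mathrm{Im}\,\phi_{d_1'}}$ for $d_1 \neq d_1'$. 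Maximization over admissible $d_1$ then singles out the component of maximal dimension. Finally, for every admissible $d_1$ and every linear form $L$ one has $\phi_{d_1}([L^{d_1}], [L^{d-2d_1}]) = [L^d]$, so $X_d \subseteq \overline{\mathrm{Im}\,\phi_{d_1}}$ for every $d_1$, exhibiting the Veronese as part of the common intersection locus.

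The principal obstacle is the distinctness step. Although the unique-factorization argument is conceptually straightforward, one has to verify that the conditions ``$F$ irreducible, $G$ square-free, $\gcd(F,G)=1$'' cut out a nonempty Zariski-open subset uniformly for every $d_1 \in \{1,\dots,\lfloor d/2\rfloor\}$, in particular checking the nonemptiness of the locus of irreducible forms in each $\PP(\sym^{d_1}\CC^{n+1})^*$. This is classical but must be noted in order to apply the genericity argument across all admissible $d_1$.
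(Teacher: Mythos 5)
Your proposal follows essentially the same route as the paper: the $d=3$ case via \Cref{prop:SGR1_tangential}, the decomposition of $\mathcal{S}_{d,1}$ into the images of the maps $\phi_{d_1}$ via \Cref{lem:Sd1} together with the regrouping $G^iH=G^2\cdot(G^{i-2}H)$, dimension counts from finiteness of the fibers of $\phi_{d_1}$, and distinctness of the components via unique factorization applied to an $F^2G$ with $F$ irreducible. If anything you are more explicit than the paper, since you supply the converse inclusion $\mathrm{Im}\,\phi_{d_1}\subseteq\mathcal{S}_{d,1}$ and you flag the genericity caveat, which is indeed the genuinely delicate point shared by both arguments (for $n=1$ there are no irreducible binary forms of degree at least $2$, so the distinctness step degenerates there).
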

\begin{proof}
    Let $d= 3$. We have seen in \Cref{prop:SGR1_tangential} that $\mathcal{S}_{3,1}=\tau(X_3)$, where $X_3$ is the third Veronese variety. It is classically known that $\dim(\tau(X_3))=2n$ and that the tangential variety is irreducible (cf. e.g. \cite[Proposition 1.1]{CGG02}). This covers the case $d=3$.

   Let now $d \geq 4$. By \Cref{lem:Sd1}, given an element $[F]\in \mathcal{S}_{d,1}$, we have that $F = G^i\cdot H$ with $i \geq 2$, for polynomials $G,H$ such that $i \cdot \deg(G) + \deg(H) = d=i\cdot d_1+d_2$. Therefore we can consider the following map \begin{align}\label{eq:map_phi}
   	\phi_{d_1,i}: \PP (\sym^{d_1}\CC^{n+1})^* &\times  \,\PP (\sym^{d_2}\CC^{n+1})^* \longrightarrow \PP (\sym^{d}\CC^{n+1})^*   \\
   	(\,[F]&\,,\,[G]\,) \quad \mapsto \qquad [F^{\,i} \cdot G] \nonumber 
   \end{align}
   and look at the image of this map in order to get different components of $\mathcal{S}_{d,1}$. Notice that if $i>2$ then a form of type $F^{\,i}\cdot G$ can be factored as $F^{\, 2}\cdot (F^{\, i-2}\cdot G)$. Therefore the component given by $\mathrm{Im}(\phi_{d_1,i})$ would be contained in the one given by $\mathrm{Im}(\phi_{d_1,2}):= \mathrm{Im}(\phi_{d_1})$. 
  Since $\phi_{d_1}$ parameterizes each component of the variety $\mathcal{S}_{d,1}$, to find the dimension of a component, it is enough to study the dimension of the image of this map.
 Note that the map $\phi_{d_1}$ is the composition of two maps:
    \begin{align*}
\mathbb{P}(\sym^{d_1}\mathbb{C}^{n+1*})&\rightarrow \mathbb{P}(\sym^{2d_1}\mathbb{C}^{n+1*})\rightarrow \mathbb{P}(\sym^{d}\mathbb{C}^{n+1*})\\
        [F] \quad &\mapsto \qquad [F^{\,2}] \quad \mapsto \quad [F^{\,2}\cdot G],
    \end{align*}
where $[G]\in\mathbb{P}(\sym^{d_2}\CC^{n+1})^* $. The map sending $F\mapsto F^{\,2} $ is 2 to 1 while the map sending $F \to F^2\cdot  G$ is injective. 
Therefore, by the fiber dimension theorem we have $$
\dim (\mathrm{Im}(\phi_{d_1}))={{d_1+n}\choose d_1}+{{d_2+n}\choose d_2}-2.
$$
In particular the maximum dimensional component is achieved for $d_1=\lfloor \frac{d}{2} \rfloor $.
   
    Finally, it suffices to show that for $d\geq 4$, $\mathcal{S}_{d,1}$ is reducible and all components intersect only on the $d$th Veronese variety. First, it is clear that the $d$th Veronese is the intersection of all components. To show $\mathcal{S}_{d,1}$ is not irreducible it suffices to show that all components are not contained in one component. Specifically, it suffices to show that $\mathrm{Im}(\phi_{d_1})$ is not contained in the maximal dimension component for all $d_1$ such that $2 d_1+d-2d_1=d$.  Let $d_1<\lfloor \frac{d}{2} \rfloor $ and take irreducible forms $F,G$ of degrees $d_1$ and $d-2d_1$ respectively. We have that the class of $F^{\,2}\cdot G$ is an element of $\mathrm{Im}(\phi_{d_1})$ but it cannot be an element of $ \mathrm{Im}\left( \phi_{\lfloor \frac{d}{2}\rfloor} \right)$.
\end{proof}

\subsection{The space $\mathcal{S}_{d,2}$} We now consider symmetric tensors having symmetric geometric rank at most two.
Here we distinguish between irreducible hypersurfaces and reducible ones. Looking first at the component given by reducible hypersurfaces gives the following bound on the dimension of $\mathcal{S}_{d,2}$.

\begin{lem}\label{lem: red Sd2} One (possibly reducible) 
component of
	$\mathcal{S}_{d,2}$ is given by reducible polynomials $F = G \cdot H$ where $\deg(G) + \deg(H) = d$. Furthermore, $\dim(\mathcal{S}_{d,2}) \geq \binom{n+d-1}{d-1}+n$.
\end{lem}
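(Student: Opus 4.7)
The argument splits naturally into two pieces: verifying that every reducible polynomial $F = GH$ produces an element of $\mathcal{S}_{d,2}$, and exhibiting a large enough parameter family to obtain the dimension lower bound.

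First I would establish the containment by a direct calculation with partial derivatives. By the product rule, $\partial_i(GH) = (\partial_i G)H + G(\partial_i H)$, so every point of $V(G) \cap V(H)$ annihilates every partial derivative of $F$ and therefore lies in $\mathrm{Sing}(F)$. Since $V(G)$ and $V(H)$ are both hypersurfaces in $\mathbb{P}^n$, the projective dimension theorem gives $\codim(V(G) \cap V(H)) \leq 2$. Hence $\sgr(F) = \codim \mathrm{Sing}(F) \leq 2$, so $[F] \in \mathcal{S}_{d,2}$.

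To single out a component, for each splitting $d = d_1 + d_2$ with $d_1, d_2 \geq 1$, I would introduce the multiplication morphism
\[
\phi_{d_1}: \mathbb{P}(\sym^{d_1}\mathbb{C}^{n+1})^* \times \mathbb{P}(\sym^{d_2}\mathbb{C}^{n+1})^* \longrightarrow \mathbb{P}(\sym^{d}\mathbb{C}^{n+1})^*, \qquad ([G],[H]) \mapsto [GH].
\]
Each image $\mathrm{Im}(\phi_{d_1})$ is an irreducible closed subvariety of $\mathcal{S}_{d,2}$, and the locus of reducible polynomials of degree $d$ inside $\mathcal{S}_{d,2}$ is exactly the finite union of these images over the valid choices of $d_1$. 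This union is the (possibly reducible) component asserted in the statement.

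For the dimension bound I would specialize to $d_1 = 1$, $d_2 = d-1$, so that the factor $G$ is a linear form. The key point is that $\phi_1$ has $0$-dimensional generic fibers: for a generic $[LH]$ with $L$ linear and $H$ irreducible of degree $d-1$ coprime to $L$, unique factorization in $\mathbb{C}[x_0,\ldots,x_n]$ forces any alternative factorization $L'H' = LH$ to satisfy $[L'] = [L]$ and $[H'] = [H]$. The fiber dimension theorem then yields
\[
\dim \mathrm{Im}(\phi_1) \;=\; \dim \mathbb{P}^n + \dim \mathbb{P}(\sym^{d-1}\mathbb{C}^{n+1})^*,
\]
which gives the asserted lower bound on $\dim \mathcal{S}_{d,2}$.

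The main technical obstacle is the fiber calculation: one must confirm that the generic element of $\mathrm{Im}(\phi_1)$ factors uniquely as (linear) $\times$ (degree $d-1$), which boils down to unique factorization in a polynomial ring applied to a generic choice of $L$ and irreducible $H$. Everything else---the product rule observation, the codimension-at-most-two estimate for $V(G)\cap V(H)$, and the fiber dimension theorem---is standard.
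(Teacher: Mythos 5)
Your argument follows the paper's proof: the product-rule observation that $V(G)\cap V(H)\subseteq \mathrm{Sing}(GH)$, the codimension-at-most-two estimate, and the parametrization of the linear-factor locus by the multiplication map $\phi_1$. The first half is correct, and in fact slightly more careful than the paper's, since you observe that codimension at most two already suffices for membership in $\mathcal{S}_{d,2}$ without assuming $G$ and $H$ coprime (for $n\geq 2$; for $n=1$ the statement is vacuous anyway).

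The last step, however, does not deliver the stated bound. You correctly compute
$\dim \mathrm{Im}(\phi_1) = \dim\PP^n + \dim\PP(\sym^{d-1}\CC^{n+1})^* = n + \binom{n+d-1}{d-1} - 1$,
but the lemma asserts the lower bound $n + \binom{n+d-1}{d-1}$, which is one larger; the closing claim that this ``gives the asserted lower bound'' is a non sequitur. Indeed, the locus of degree-$d$ forms with a linear factor can never have dimension $n+\binom{n+d-1}{d-1}$, because it is dominated by the variety $\PP^n\times\PP(\sym^{d-1}\CC^{n+1})^*$, whose dimension is $n+\binom{n+d-1}{d-1}-1$. A sanity check with $n=2$, $d=3$: reducible plane cubics (conic times line) form a $7$-dimensional family in $\PP^9$, whereas the formula gives $8$. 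The paper's own proof contains the identical off-by-one (it reports the dimension of this component as $\binom{n+d-1}{d-1}+n$), so either the statement should read $\binom{n+d-1}{d-1}+n-1$ or a larger component of $\mathcal{S}_{d,2}$ must be exhibited by some other construction; your write-up inherits the discrepancy rather than resolving it.
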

\begin{proof}
If $F = G \cdot H$, then $V(G) \cap V(H) \subseteq \text{Sing}(F)$. If $G$ and $H$ have no common factors, then $V(G) \cap V(H)$ has codimension two in $\PP^n$, showing $[F] \in \mathcal{S}_{d,2}$. 

The construction above shows that we can bound the dimension of $\mathcal{S}_{d,2}$ by the space of polynomials $F = G\cdot H$ where $\deg(G) = d_1$, $\deg(H) = d-d_1$. This will achieve the maximum dimension when $d_1 = 1$. In this case, the component of $\mathcal{S}_{d,2}$ corresponding to polynomials with a linear factor has dimension $\binom{n+d-1}{d-1}+n$.
\end{proof}

We now consider irreducible polynomials $[F] \in \mathcal{S}_{d,2}$. By \Cref{rem:normal_sing}, these are non-normal hypersurfaces.

\begin{rem}\label{rem:irr_non-normal}
 Recall that for an irreducible, non-normal variety $X\subset \PP^n$, the singular locus of $X$ is of codimension at most one in $X$ (cf. \Cref{rem:normal_sing}) and hence in the case of non-normal irreducible hypersurfaces $X=V(F)$, we get $[F]\in \mathcal{S}_{d,2}$. Therefore, a component of $\mathrm{S}_{d,2}$ contains irreducible non-normal hypersurfaces.
\end{rem}

We focus now on the case $d=3$, and give a complete description of $\mathcal{S}_{3,2}$.
By \Cref{rem:irr_non-normal} a component of $\mathcal{S}_{3,2}$ is
\begin{align*}
\mathcal{C}_{\mathrm{ir}}=\{ [F]\in \PP(\sym^3\mathbb{C}^{n+1})^* \, \vert \, V(F) \mbox{ is irreducible and non-normal}\}.
\end{align*}
Moreover, by \cite[Lemma 2.4]{WEScubic} if $X=V(F)\subset \mathbb{P}^n$ is a non-normal irreducible cubic hypersurface then the singular locus of $X$ is linear. In \cite{WEScubic} the authors classify non-normal, irreducible, cubic hypersurfaces and they show that if $n\geq 5$, the hypersurface given by an element in $\mathcal{C}_{\mathrm{ir}}$ is a cone  \cite[Remark 2.3]{WEScubic}.

\begin{table}[h]
\begin{tabular}{|l|l|l}
\cline{1-2}
$n$                       & Normal form of cubic polynomials with $\sgr$ 2                                                                          &  \\ \cline{1-2}
\multicolumn{1}{|l|}{2} & \begin{tabular}[c]{@{}l@{}}$x_0^2x_2+x_1^3$\\ $x_0^2x_2+x_1^3+x_1^2x_2$\end{tabular} &  \\ \cline{1-2}
\multicolumn{1}{|l|}{3} & $x_0^2x_2+x_1^3+x_0x_1x_3$                                                           &  \\ \cline{1-2}
\multicolumn{1}{|l|}{4} & $x_0^2x_2+x_1^3+x_1^2x_3+x_0x_1x_4$                                                  &  \\ \cline{1-2}
\end{tabular}
\caption{The normal form of irreducible cubic polynomials with symmetric geometric rank two  in fewer than six variables \cite[Theorem 3.1]{WEScubic}.}\label{table:cubic}
\end{table}

The classification in \Cref{table:cubic} suggests that all irreducible degree three polynomials with symmetric geometric rank two have a common shape given by 
$$L_1^2 \cdot M_1 + L_2^2 \cdot M_2 + L_1 \cdot L_2 \cdot M_3,$$ 
for linear forms $L_1,L_2,M_1,M_2,M_3$. Since the space of non-normal irreducible cubic polynomials is a component of $\mathcal{S}_{3,2}$ we would like to be more precise about this.

\begin{prop}\label{lemma:dimS_2ic}
Let $n \geq 3$ and $F \in \mathcal{C}_{\mathrm{ir}}$. Then,
\[ F = L_1^2 \cdot M_1 + L_2^2 \cdot M_2 + L_1 \cdot L_2 \cdot M_3 \]
for linear forms $L_1,L_2,M_1,M_2,M_3$ such that $L_1,L_2$ are not proportional, $M_1 , M_2 \neq 0$, and $M_1,M_2,M_3$ are not proportional. Moreover, $\dim(\mathcal{C}_{\mathrm{ir}}) = 5n-2$.
\end{prop}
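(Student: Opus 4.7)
The strategy is to first extract the canonical form $F=L_1^2M_1+L_2^2M_2+L_1L_2M_3$ from the linearity of $\mathrm{Sing}(V(F))$, then compute $\dim(\mathcal{C}_{\mathrm{ir}})$ via an incidence variety over the Grassmannian of codimension-two linear subspaces. Throughout, write $V_k:=\CC[x_0,\dots,x_n]_k$.

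For the structure, take $F\in\mathcal{C}_{\mathrm{ir}}$. By the cited \cite[Lemma 2.4]{WEScubic}, $\mathrm{Sing}(V(F))$ is linear, while by \Cref{rem:normal_sing} it has codimension exactly two, so $\mathrm{Sing}(V(F))=V(L_1,L_2)$ for linearly independent linear forms $L_1,L_2$. Choosing coordinates where $L_1=x_0$ and $L_2=x_1$, the condition that $F$ and all its partials vanish along $\{x_0=x_1=0\}$ translates to $F\in(L_1,L_2)^2$, which in degree three is exactly the claimed expression $F=L_1^2M_1+L_2^2M_2+L_1L_2M_3$ for some linear forms $M_1,M_2,M_3$. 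The non-triviality conditions are forced by irreducibility of $F$: if $M_1=0$ then $F=L_2(L_2M_2+L_1M_3)$ is reducible and symmetrically $M_2\ne 0$; if $M_1,M_2,M_3$ were all proportional to a common linear form $M$, then $F=M\cdot(\alpha L_1^2+\beta L_1L_2+\gamma L_2^2)$, again contradicting irreducibility.

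For the dimension I would parametrize via the incidence variety
\[
\mathcal{I}:=\bigl\{(W,[F])\in \mathrm{Gr}(2,V_1)\times\PP(V_3)\ \big|\ V(F)\text{ is singular along }V(W)\bigr\}.
\]
The first projection $\mathcal{I}\to\mathrm{Gr}(2,V_1)$ is surjective with all fibers isomorphic to $\PP^{3n}$: for $W=\langle L_1,L_2\rangle$, the vector space $(L_1,L_2)^2\cap V_3$ has dimension $3n+1$, which one checks by counting degree-three monomials in which $x_0,x_1$ together contribute multiplicity at least two (giving $4+3(n-1)=3n+1$), or equivalently by noting that the surjection $V_1^{\oplus 3}\to (L_1,L_2)^2\cap V_3$ sending $(M_1,M_2,M_3)\mapsto L_1^2M_1+L_2^2M_2+L_1L_2M_3$ has a two-dimensional Koszul-syzygy kernel spanned by $(L_2,0,-L_1)$ and $(0,-L_1,L_2)$. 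Therefore $\dim(\mathcal{I})=2(n-1)+3n=5n-2$.

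The second projection $p_2\colon\mathcal{I}\to\PP(V_3)$ is generically injective on the preimage of $\mathcal{C}_{\mathrm{ir}}$, since for any such $F$ the plane $W$ is recovered as the linear piece of the ideal of $\mathrm{Sing}(V(F))$. Hence $\mathcal{C}_{\mathrm{ir}}\subseteq p_2(\mathcal{I})$ and $\dim(\mathcal{C}_{\mathrm{ir}})\le 5n-2$. Non-emptiness is witnessed by the explicit forms in Table~\ref{table:cubic} (together with their cones for $n\ge 5$), and since both irreducibility and non-normality are open conditions, $\mathcal{C}_{\mathrm{ir}}$ is a dense open subset of $p_2(\mathcal{I})$ and therefore has dimension exactly $5n-2$. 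The main technical point is verifying the fiber dimension $3n+1$ and the generic injectivity of $p_2$; both are short but easy to slip on, so I would double-check via the explicit coordinate choice $L_1=x_0$, $L_2=x_1$.
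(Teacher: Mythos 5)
Your proposal is correct and follows essentially the same route as the paper: the normal form is extracted from the linearity of the singular locus via \cite[Lemma 2.4]{WEScubic} (you phrase the key step as $F\in(L_1,L_2)^2$, where the paper reaches the same conclusion by an explicit Euler-identity computation), and the dimension is computed from the same incidence variety over $\mathrm{Gr}$ with fiber dimension $3n$ and generically injective projection to $\PP(\sym^3\CC^{n+1})^*$. If anything, your Koszul-syzygy justification of the fiber dimension $3n$ (rather than the naive $3n+2$) is more explicit than the paper's, which simply asserts it.
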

\begin{proof}Let us first show that $\mathcal{C}_{\mathrm{ir}}$ parametrizes all irreducible, cubic polynomials whose singular locus is of codimension two.
By \cite[Lemma 2.4]{WEScubic}, if $[F]\in \mathbb{P}(\sym^3\mathbb{C}^{n+1})^*$ is an irreducible, cubic polynomial such that $V(F)$ has a singular locus of codimension two, then $\mathrm{Sing}(F)$ is linear, which means that we can write $\text{Sing}(F) = V( L_1, L_2 )$ for independent linear forms $L_1,L_2$. Clearly, if $F= L_1^2 \cdot M_1 + L_2^2 \cdot M_2 + L_1\cdot L_2 \cdot M_3$, then the ideal of the singular locus $I_{\mathrm{Sing}(F)}\subseteq(L_1, L_2 )$. Since we know that $\text{Sing}(F)$ has codimension $2$ and it is linear, this means that $\text{Sing}(F) = V(L_1, L_2 )$. 

We now have to show that any cubic hypersurface $V(F)$ such that $\text{Sing}(F) = V( L_1, L_2 )$ for linear polynomials $L_1, L_2$ is of the form
\[ F = L_1^2 \cdot M_1 + L_2^2 \cdot M_2 + L_1\cdot L_2\cdot M_3 . \]

First of all, note that if $L_1,L_2$ are linearly dependent then $[F]\in \mathcal{S}_1$. Without loss of generality, we can consider a change of basis so that $L_1 = x_1$ and $L_2 = x_2$
By Euler's identity, we can write $F= x_1 \cdot G_1 + x_2 \cdot G_2$ for some $G_1,G_2$ with $\deg(G_1)=\deg(G_2)=2$. This gives,
\begin{align*}
    \frac{\partial F}{\partial x_1} &= G_1 + x_1 \cdot \frac{\partial G_1}{\partial x_1} + x_2 \cdot \frac{\partial G_2}{\partial x_1}, \\
    \frac{\partial F}{\partial x_2} &= x_1 \cdot \frac{\partial G_1}{\partial x_2} + G_2 + x_2 \cdot \frac{\partial G_2}{\partial x_2}.
\end{align*}
Since both $\frac{\partial F}{\partial x_1}, \frac{\partial F}{\partial x_2}$ must vanish on $V(L_1,L_2)= V(x_1,x_2)$, we have that
\begin{align*}
    G_1 &= x_1 \cdot P_1 + x_2 \cdot P_2, \\
    G_2 &= x_1\cdot  Q_1 + x_2 \cdot Q_2,
\end{align*}
for some linear forms $P_1,P_2,Q_1,Q_2$. Substituting these expressions into $F$, we have that
\begin{align*}
    F&= x_1 \cdot G_1 + x_2 \cdot G_2 \\
    &= x_1 (x_1\cdot P_1 + x_2\cdot P_2) + x_2 (x_1 \cdot Q_1 + x_2 \cdot Q_2) \\
    &= x_1^2\cdot  P_1 + x_2^2\cdot Q_2 + x_1 x_2 (P_2 + Q_1).
\end{align*}
This shows that $\mathcal{C}_{\mathrm{ir}}$ parametrizes all irreducible cubic hypersurfaces whose singular locus is of codimension two. 

 We conclude this proof by computing the dimension of $\mathcal{C}_{\mathrm{ir}}$.
Consider the following incidence variety
$$I := \{ ([F],\mathrm{Sing}(F)) ~\vert~ [F]\in  \mathcal{C}_{ir}\} \subseteq
\mathbb{P}(\sym^3\mathbb{C}^{n+1}) ^*\times \mathrm{Gr}(n-1,n+1).
$$ Once we fix hyperplanes $\ell_1=V(L_1),\ell_2=V(L_2)$, then for any $M_1,M_2,M_3$ we have that $\mathrm{Sing}(L_1^2 \cdot M_1 + L_2^2 \cdot M_2 + L_1 \cdot L_2 \cdot M_3)=V(L_1,L_2)$. So a general fiber above the second projection from $I$ has dimension $3n$.
Moreover, the image of $I$ via the first projection is $\mathcal{C}_{\mathrm{ir}}$ and so a general fiber above the first projection is
a singleton. Therefore, since the dimension of the Grassmannian is $\dim (\mathrm{Gr}(n-1,n+1))=(n-1)(n+1-n+1)=2n-2$ then $\dim(\mathcal{C}_{\mathrm{ir}})=2n-2+3n=5n-2$. 
\end{proof}

Now that we completely described irreducible, cubic hypersurfaces $V(F)$ for which $\sgr(F)=2$, Let us restate \Cref{lem: red Sd2} for the case  $d = 3$.

\begin{lem}\label{lemma:redS2_linear}
Let $F$ be a reducible cubic not of the form $F = L^2 \cdot M$ for linear forms $L, M$. Then $\sgr(F)= 2$.
\end{lem}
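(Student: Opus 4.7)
My plan is to show $\sgr(F)=2$ by sandwiching it between the matching bounds $\sgr(F)\le 2$ and $\sgr(F)\ge 2$. The upper bound will follow from exhibiting a coprime factorization and invoking \Cref{lem: red Sd2}; the lower bound will follow from the classification of symmetric geometric rank one cubics obtained in \Cref{prop:SGR1_tangential}.

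First I would treat the upper bound by a short case analysis on the factorization type of $F$. If $F=L\cdot Q$ with $L$ linear and $Q$ an irreducible quadratic, then $L\nmid Q$ (otherwise $Q$ would be reducible), so the factors $L$ and $Q$ are coprime. If instead $F$ splits as a product of three linear forms $F=L_1L_2L_3$, then the assumption that $F$ is not of the form $L^2\cdot M$ forces the $L_i$'s to be pairwise non-proportional, and I regroup as $F=L_1\cdot(L_2L_3)$ with coprime factors. In both cases \Cref{lem: red Sd2} yields $[F]\in\mathcal{S}_{3,2}$, i.e., $\sgr(F)\le 2$.

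Next I would establish the lower bound $\sgr(F)\ge 2$. Since $F\not\equiv 0$, certainly $\sgr(F)\ge 1$. To rule out $\sgr(F)=1$, I would invoke \Cref{prop:SGR1_tangential} (equivalently, \Cref{lem:Sd1} specialized to $d=3$), which says that every element of $\mathcal{S}_{3,1}$ is of the form $[L^2\cdot M]$ for linear forms $L,M$. Since $F$ is explicitly excluded from this shape, $[F]\notin\mathcal{S}_{3,1}$, so $\sgr(F)\ge 2$. Combining the two bounds forces $\sgr(F)=2$.

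I do not foresee any serious obstacle: the argument is a direct combination of the two previous results, with only the factorization casework requiring some care. The one delicate point is ensuring that a coprime factorization is available, but the hypothesis $F\ne L^2\cdot M$ is precisely what rules out the bad configurations (two proportional linear factors, or a triple linear factor).
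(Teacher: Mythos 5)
Your proof is correct and follows exactly the route the paper intends: the paper states this lemma without proof, presenting it as a restatement of \Cref{lem: red Sd2} for $d=3$, and the implicit argument is precisely your combination of a coprime factorization (giving $\sgr(F)\le 2$ via \Cref{lem: red Sd2}) with the classification $\mathcal{S}_{3,1}=\tau(X_3)$ (giving $\sgr(F)\ge 2$ since $F\ne L^2\cdot M$). Your explicit casework on the factorization type, checking that excluding $L^2\cdot M$ rules out a repeated linear factor, fills in the only detail the paper leaves unstated.
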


We are now ready to fully describe the space $\mathcal{S}_{3,2}$.

\begin{thm}\label{thm:S_{3,2}}
The class of any element in $\mathcal{S}_{3,2}$ is either of the form \[F = L_1^2 \cdot M_1 + L_2^2 \cdot M_2 + L_1\cdot L_2 \cdot M_3 \]
for  linear forms $L_1,L_2,M_1,M_2,M_3$ satisfying the conditions in \Cref{lemma:dimS_2ic} or it is reducible. Specifically, the space $\mathcal{S}_{3,2} $ can be decomposed as $$\mathcal{S}_{3,2}=  \mathcal{C}_{\mathrm{ir}} \cup \mathcal{C}_{\mathrm{re}},$$ where 
$\mathcal{C}_{\mathrm{ir}}$ consists of polynomials of the above form and
$\mathcal{C}_{\mathrm{re}}$ consists of reducible cubics. Moreover,  
$$\dim (\mathcal{C}_{\mathrm{re}})=\binom{n+2}{2} +n \hbox{ and } \dim(\mathcal{C}_{\mathrm{ir}})=5n-2,$$ and when $n \geq 6$ the component $ \mathcal{C}_{\mathrm{re}}$ is of larger dimension.
\end{thm}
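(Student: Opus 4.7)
The proof is essentially an assembly of the preceding results in this subsection, combined with the normality criterion from Theorem~\ref{rem:normal_sing}.

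First I would establish the set-theoretic decomposition $\mathcal{S}_{3,2} = \mathcal{C}_{\mathrm{ir}} \cup \mathcal{C}_{\mathrm{re}}$. The inclusion $\mathcal{C}_{\mathrm{ir}} \cup \mathcal{C}_{\mathrm{re}} \subseteq \mathcal{S}_{3,2}$ repackages what is already proved: $\mathcal{C}_{\mathrm{re}} \subseteq \mathcal{S}_{3,2}$ follows from Lemma~\ref{lemma:redS2_linear} (the remaining reducible case $F = L^2 M$ lies in $\mathcal{S}_{3,1} \subset \mathcal{S}_{3,2}$ by the chain of inclusions stated after Definition~\ref{spaces_sgr_r}), while $\mathcal{C}_{\mathrm{ir}} \subseteq \mathcal{S}_{3,2}$ is the partial-derivative computation in the first part of Proposition~\ref{lemma:dimS_2ic}, showing that cubics of the given normal form have singular locus $V(L_1,L_2)$ of codimension $2$.

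For the reverse inclusion, take $[F] \in \mathcal{S}_{3,2}$. If $F$ is reducible, there is nothing to prove. Otherwise $V(F) \subset \mathbb{P}^n$ is an irreducible cubic hypersurface whose singular locus has codimension at most $2$ in $\mathbb{P}^n$, hence codimension at most $1$ inside $V(F)$. Theorem~\ref{rem:normal_sing} (in contrapositive form) then forces $V(F)$ to be non-normal, so Proposition~\ref{lemma:dimS_2ic} applies and yields the required normal form, giving $[F] \in \mathcal{C}_{\mathrm{ir}}$.

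Next I would compute the two dimensions. The equality $\dim(\mathcal{C}_{\mathrm{ir}}) = 5n - 2$ is the content of Proposition~\ref{lemma:dimS_2ic}. For $\dim(\mathcal{C}_{\mathrm{re}})$, one considers the multiplication map
\[
\psi \colon \mathbb{P}(\mathbb{C}^{n+1})^* \times \mathbb{P}(\sym^2 \mathbb{C}^{n+1})^* \longrightarrow \mathbb{P}(\sym^3 \mathbb{C}^{n+1})^*, \qquad ([L],[Q]) \mapsto [L\cdot Q],
\]
whose image is precisely $\mathcal{C}_{\mathrm{re}}$. For a generic choice with $L$ coprime to $Q$, the factorization of $LQ$ into a linear and a quadratic form is unique up to scalars, so $\psi$ is generically injective, and the fiber dimension theorem gives the stated dimension $\binom{n+2}{2} + n$ of the maximal component. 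Finally, the comparison $\binom{n+2}{2} + n > 5n - 2$ reduces to a quadratic inequality in $n$ which holds for $n$ past the stated threshold.

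The main technical content is already absorbed by Proposition~\ref{lemma:dimS_2ic}, which classifies irreducible cubics with codimension-two singular locus via the results of \cite{WEScubic}. Relative to that, the present theorem is bookkeeping: the only delicate step is invoking the normality criterion of Theorem~\ref{rem:normal_sing} to force an irreducible element of $\mathcal{S}_{3,2}$ to be non-normal, so that Proposition~\ref{lemma:dimS_2ic} becomes applicable.
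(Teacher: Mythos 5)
Your proposal is correct and follows essentially the same route as the paper: split $\mathcal{S}_{3,2}$ into reducible cubics (handled by \Cref{lemma:redS2_linear}, with the $L^2M$ case absorbed into $\mathcal{S}_{3,1}$) and irreducible ones (forced to be non-normal via \Cref{rem:normal_sing} and then classified by \Cref{lemma:dimS_2ic}), and compute $\dim(\mathcal{C}_{\mathrm{re}})$ from the generically injective multiplication map $\psi$. If anything, you spell out the normality contrapositive and the injectivity of $\psi$ more explicitly than the paper's own (rather terse) proof does.
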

\begin{proof}
First note that $\mathcal{S}_{3,2}$ is reducible, so $\mathcal{S}_{3,2} = \mathcal{C}_{\mathrm{ir}} \cup\mathcal{C}_{\mathrm{re}}$ and the intersection of the two components is precisely $\mathcal{S}_{3,1}$. For $F \in \mathcal{C}_{\mathrm{re}} \backslash \mathcal{S}_{3,1}$, $F = G \cdot H $, where $\deg(G) = 2$, $\deg(H) = 1$ and $H \nmid G$. By \Cref{lemma:redS2_linear}, $\text{Sing}(F) = V( G, H)$ is codimension two. To compute the dimension of $\mathcal{C}_{\mathrm{re}}$, observe that reducible cubics can be parameterized via
\begin{align*}
	\psi : \PP(\sym^2\CC^{n+1})^*&\times \PP(\sym^1\CC^{n+1})^*\longrightarrow \PP(\sym^3\CC^{n+1})^*\\
	(\,[G] \,,& \,[H]\,) \quad \mapsto \quad  [G\cdot H]
\end{align*}
and the image of $\psi$ gives us the dimension of $\mathcal{C}_{\mathrm{re}}$. Hence $\dim (\mathcal{C}_{\mathrm{re}})=\binom{n+2}{2} +n$. For the dimension and the description of the irreducible component we refer to \Cref{lemma:dimS_2ic}.
\end{proof}

The above result completely characterize $\mathcal{S}_{3,2}$ and it ends our detailed discussion on the case $(d,r)=(3,2)$. While we leave a complete description of $\mathcal{S}_{d,r}$ as future work, one can still give information on the space  $\mathcal{S}_{d,r}$ for large $d$. 
\begin{rem}
Observe that in \Cref{thm:S_{3,2}} the space $\mathcal{S}_{3,2}$ decomposes into reducible and irreducible polynomials. For the irreducible polynomials, these are precisely polynomials that are singular along a codimension two linear subspace.  By \cite{Slavov2015TheMS}, for sufficiently large $d$, the largest irreducible component of the space $\mathcal{S}_{d,r}$ will also consist of polynomials singular along a codimension $r$ linear space. Therefore, making a construction analogous to the one used for computing the dimension of $\mathcal{C}_{\text{ir}}$ in the proof of \Cref{lemma:dimS_2ic} can be used to approximate the largest dimensional component of $\mathcal{S}_{d,r}$ so long as $d$ is sufficiently large.
\end{rem}

\section{Some concrete examples}
We conclude this paper by computing the symmetric geometric rank of  relevant symmetric tensors as well as providing concrete descriptions of $\mathcal{S}_{d,r}$ for small $n$.

\subsection{Computing the symmetric geometric rank of relevant tensors}

Let us start with a very standard example of symmetric tensor, i.e. the fully symmetric tensor. 
\begin{example}[Fully symmetric tensor]
	Let $T\in \sym^3\mathbb{C}^{n+1}$be the fully symmetric tensor with $n\geq 2$, i.e.
	$$
	T=\sum_{i,j,k} e_{i}\otimes e_{j}\otimes e_{k}.
	$$
	The polynomial representation $h_3$  of $T$ is given by the sum of all degree three monomials in the variables $x_0,\dots,x_n$, namely
	$$
	h_3=\sum_{0\leq i\leq j \leq k\leq n}x_ix_jx_k=x_0^3+x_0^2x_1+\cdots+x_{n-1}x_n^2+x_n^3. 
	$$
Let us prove that $V(h_3)$ is smooth and hence $\sgr(T)$ is maximal. We want to argue by induction on $n\geq 2$.
Before proving our result, let us notice that the complete homogeneous polynomial of degree two $h_2(x_0,\dots,x_n) = \sum_{0 \leq i \leq j \leq n} x_ix_j$ is smooth.
Solving $\partial h_2(x) / \partial x_i=0$ for all $i=0,\dots,n$ corresponds to solving the linear system $Ax=0$ where 
$$
A=\begin{pmatrix}
2 & 1 &1 & \dots & 1\\
1 & 2 & 1 & \dots & 1\\
\vdots& & \ddots & &\vdots\\
1& & &2 & 1\\
1&1 &\dots &1 & 2
\end{pmatrix} 
$$
Since $A$ is invertible, the only possible solution is the trivial one and hence $V(h_2)$ is smooth.

Let us start now the induction. For the base case $n=2$ one can directly compute that $V(h_3)$ is smooth. So let $n\geq 3$. Notice that for all $i=0,\dots,n$ the polynomial $h_3$ can be written as 
$$
h_3=x_ih_2(x_0,\dots,x_n)+P_i(x_0,\dots, x_n)
$$
where $h_2(x_0,\dots,x_n)$ is the complete homogeneous polynomial of degree two and $P_i$ does not depend on $x_i$. Therefore, for all $i=0,\dots,n$
\begin{align}\label{eq:sys_h3}
\frac{\partial h_3}{\partial x_i}= h_2(x_0,\dots,x_n)+ x_i \frac{\partial h_2(x_0,\dots,x_n)}{\partial x_i}.
\end{align}
Moreover, by Euler's formula 
\begin{align*}
\frac{\partial h_3}{\partial x_i}= \sum_{k=0}^n x_k  \frac{\partial h_2(x_0,\dots,x_n)}{\partial x_k}+ x_i \frac{\partial h_2(x_0,\dots,x_n)}{\partial x_i}.
\end{align*}  Having $ \partial h_3/\partial x_i =0$ for all $i $ corresponds to solving the system
$$
A=\begin{pmatrix}
2 & 1 &1 & \dots & 1\\
1 & 2 & 1 & \dots & 1\\
\vdots& & \ddots & &\vdots\\
1& & &2 & 1\\
1&1 &\dots &1 & 2
\end{pmatrix} 
\begin{pmatrix} 
x_0 \frac{\partial h_2}{\partial x_0} \\
\vdots \\
\vdots \\
x_n \frac{\partial h_2}{\partial x_n}
\end{pmatrix} = \mathbf{0}.
$$
Let $x$ be a solution of this system. Since $A$ is invertible, we have $x_i \frac{\partial h_2}{\partial x_i}=0 $ for all $i$ and hence we have that $h_2(x) = 0$.
Therefore, $x$ is also a solution for the system given by $\{h_2=0, \partial h_2 / \partial x_i=0\}$.
Since we earlier proved that $h_2$ is smooth, the constraints
$$
\{h_2=0, \ x_i \partial h_2 / \partial x_i=0\}
$$
 enforces that at least one of the $x_i$ must be zero. Without loss of generality we may assume $x_n=0$. But now the tuple $(x_0,\dots,x_{n-1})$ is a common zero for $h_3(x_0,\dots,x_{n-1})$ and all its partial derivatives, so we can conclude by induction that $x_i = 0$ for all $i$ and hence $h_3$ is smooth.
\end{example}

The \emph{Coppersmith-Winograd tensor} is a class of tensors introduced in \cite{CoWite}. These tensors have been helpful for giving bounds on the exponent characterizing the complexity of matrix multiplication (cf. \cite{Landcompltheory}).
\begin{example}[Big Coppersmith-Winograd tensor]\label{ex:bigCW}Fix  a positive integer $q<n$ and let $T\in (\CC^{n+1})^{\otimes 3}$ be the tensor
	{\small
	\begin{align*}
	T&= \sum_{i=1}^q (e_0\otimes e_i\otimes e_i +e_i\otimes e_0\otimes e_i+e_i\otimes e_i\otimes e_0)\\
	&\qquad \quad + e_0\otimes e_0\otimes e_{q+1}+e_0\otimes e_{q+1}\otimes e_0+e_{q+1}\otimes e_0\otimes e_{0}\\
	&= W_1+\cdots +W_{q}+\tilde{W},
	\end{align*}
}
\noindent where we denote $W_i=e_0\otimes e_i\otimes e_i +e_i\otimes e_0\otimes e_i+e_i\otimes e_i\otimes e_0$ and $\tilde{W}=e_0\otimes e_0\otimes e_{q+1}+e_0\otimes e_{q+1}\otimes e_0+e_{q+1}\otimes e_0\otimes e_{0}$. We recognize that these are all representations of the $W$-state tensor which is an important tensor in the quantum information literature (cf. \cite{Wref}). Clearly $T\in \sym^3\CC^{n+1}$ and therefore it makes sense to compute its symmetric geometric rank. Looking at $T $ as a homogeneous degree three polynomial $F$ we have 
$$
F=\sum_{i=1}^q x_0x_i^2+x_0^2x_{q+1}=x_0(x_1^2+\cdots +x_q^2+x_0x_{q+1})
$$
and by direct computation one sees that $\sgr(F)=\codim (\mathrm{Sing}(F))=2$. 
 \end{example}
 
 \begin{example}[Small Coppersmith-Winograd tensor]\label{ex:smallCW}
 	Fix  a positive integer $q<n$ and let $T\in (\CC^{n+1})^{\otimes 3}$ be the tensor
 		\begin{align*}
 			T=& \sum_{i=1}^q (e_0\otimes e_i\otimes e_i +e_i\otimes e_0\otimes e_i+e_i\otimes e_i\otimes e_0)= W_1+\cdots +W_{q},
 		\end{align*}
where, as before, we denote $W_i=e_0\otimes e_i\otimes e_i +e_i\otimes e_0\otimes e_i+e_i\otimes e_i\otimes e_0$. The polynomial representation of $T$ is
$$
F=\sum_{i=1}^q x_0x_i^2=x_0(x_1^2+\cdots +x_q^2)
$$
 and, as before, one can directly compute that $\sgr(T)=2$. 
 	\end{example}
 	
 Another interesting example is the \emph{Maximal compressibility tensor} (cf. \cite{LM17}).
 \begin{example}[Maximal compressibility tensor]\label{ex:maxc} Fix  a positive integer $q\leq n$ and let $T\in (\CC^{n+1})^{\otimes 3}$ be the tensor
 	\begin{align*}
 		T=& e_0\otimes e_0\otimes e_0+\sum_{i=1}^q (e_0\otimes e_i\otimes e_i +e_i\otimes e_0\otimes e_i+e_i\otimes e_i\otimes e_0).
 	\end{align*}
 	The polynomial representation of $T$ is
 	$$
 	F=x_0^3+ \sum_{i=1}^q x_0x_i^2=x_0(x_0^2+x_1^2+\cdots +x_q^2),
 	$$
 	so $\sgr(T)=2$.
 	\end{example}
  \begin{rem}
The geometric rank of \Cref{ex:bigCW}, \Cref{ex:smallCW} and \Cref{ex:maxc} is computed in \cite[Examples 5.5, 5.6 and 5.8]{GL22} respectively. In all the three cases the geometric rank is three while the symmetric geometric rank is two. 
\end{rem}

 The rest of this part is devoted to computing the symmetric geometric rank of the symmetrized part of the matrix multiplication tensor, namely
 $$
 sM_{\langle n+1 \rangle }(A,B,C):=\frac{1}{2}[\mathrm{trace}(ABC)+\mathrm{trace}(BAC)]\in \sym^3\CC^{(n+1)^2},
 $$
where $A,B,C$ are matrices of size $n+1$. In \cite[Proposition 2.3]{CHILO}, the authors proved that  the singular locus of $\{sM_{\langle n+1\rangle}=0\}$ is the variety
$$
\mathrm{Sing} (sM_{\langle n+1\rangle})=\{  [A]\in \PP M_{n+1}(\CC) ~\vert ~ A^2=0 \}.
$$
Therefore, to determine the symmetric geometric rank, it suffices to find the dimension of this variety.

\begin{prop}
  Let $n\geq 1$, then $$
  \dim \{  [A]\in \PP M_{n+1}(\CC) ~\vert ~ A^2=0 \}=\frac{(n+1)^2}{2}-1.
  $$
\end{prop}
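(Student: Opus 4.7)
The plan is to stratify the variety $\mathcal{V} = \{A \in M_{n+1}(\mathbb{C}) : A^2=0\}$ by rank and parametrize each stratum as a fiber bundle over a flag variety. First I would observe that $A^2 = 0$ is exactly the condition $\mathrm{Im}(A) \subseteq \ker(A)$. In particular, if $r = \mathrm{rank}(A)$, then $r \leq (n+1) - r$, i.e., $r \leq \lfloor (n+1)/2\rfloor$. Hence $\mathcal{V} = \bigsqcup_{r=0}^{\lfloor(n+1)/2\rfloor} \mathcal{V}_r$ where $\mathcal{V}_r$ is the locally closed stratum of square-zero matrices of rank exactly $r$.

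Next I would set up the parametrization of $\mathcal{V}_r$. Consider the incidence variety
\[
\mathcal{F}_r = \{(I, K) : I \subseteq K \subseteq \mathbb{C}^{n+1},\ \dim I = r,\ \dim K = n+1 - r\},
\]
which fibers over the Grassmannian $\mathrm{Gr}(n+1-r,\, n+1)$ with fiber $\mathrm{Gr}(r,\, n+1-r)$, giving
\[
\dim \mathcal{F}_r = r(n+1-r) + r(n+1-2r).
\]
Any $A \in \mathcal{V}_r$ is uniquely determined by the triple $(I, K, \phi)$ where $I = \mathrm{Im}(A)$, $K = \ker(A)$, and $\phi : \mathbb{C}^{n+1}/K \xrightarrow{\sim} I$ is the induced isomorphism, which lies in an affine space of dimension $r^2$ (an open subset of $\mathrm{Hom}(\mathbb{C}^{n+1}/K, I)$ isomorphic to $\mathrm{GL}_r$). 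This exhibits $\mathcal{V}_r$ as a $\mathrm{GL}_r$-bundle over $\mathcal{F}_r$, so
\[
\dim \mathcal{V}_r = r(n+1-r) + r(n+1-2r) + r^2 = 2r(n+1-r).
\]

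To conclude, I would maximize the function $r \mapsto 2r(n+1-r)$ over $0 \leq r \leq \lfloor (n+1)/2\rfloor$. The maximum occurs at $r = \lfloor (n+1)/2\rfloor$, yielding $\dim \mathcal{V} = 2 \lfloor(n+1)/2\rfloor \cdot \lceil (n+1)/2\rceil = \lfloor (n+1)^2/2\rfloor$ as an affine variety. Since $\mathcal{V}$ is a cone (closed under scaling), passing to $\mathbb{P}M_{n+1}(\mathbb{C})$ decreases the dimension by one, producing the stated value $\tfrac{(n+1)^2}{2} - 1$ (interpreted as $\lfloor(n+1)^2/2\rfloor - 1$ when $n+1$ is odd).

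I expect the main technical step to be making the fiber bundle description rigorous, in particular verifying that the map $\mathcal{V}_r \to \mathcal{F}_r$ sending $A$ to $(\mathrm{Im}(A), \ker(A))$ is a Zariski-locally trivial $\mathrm{GL}_r$-fibration so that dimensions add. An alternative route that sidesteps this, if desired, is to note that $\mathcal{V}$ is a single $\mathrm{GL}_{n+1}$-orbit closure (the closure of the nilpotent orbit corresponding to the partition $(2^{\lfloor(n+1)/2\rfloor}, 1^{\epsilon})$ with $\epsilon \in \{0,1\}$) and apply the classical dimension formula for nilpotent orbits; either way the arithmetic reduces to the computation above.
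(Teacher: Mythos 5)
Your proposal is correct and follows essentially the same route as the paper: both parametrize square-zero matrices by subspace data plus a linear map, fiber over a Grassmannian, and reduce to maximizing $2r(n+1-r)$ (the paper fibers the kernel $K$ over $\mathrm{Gr}(k,n+1)$ with fiber the projectivized space of linear maps $K^{\perp}\to K$, which is the same count under $k=n+1-r$). Your version is in fact slightly more careful on two points the paper glosses over: requiring $\phi$ to be an isomorphism makes the stratum-by-stratum parametrization a genuine bijection, and you correctly record the answer as $\lfloor (n+1)^2/2\rfloor-1$ when $n+1$ is odd, where the paper's stated formula is not an integer.
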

\begin{proof}
Denote $X=\{  [A]\in \PP M_{n+1}(\CC) ~\vert ~ A^2=0 \}$.
    Notice that $[A]\in X$ if and only if $\mathrm{Im}(A)\subset \mathrm{Ker}(A) $. For any matrix $A$, denote $\mathrm{Ker}(A)=K_A$ and call
    $$ \Sigma=\{ (K_A,[f_A]) ~\vert ~ f_A: K_A^\perp \rightarrow K_A \mbox{ linear} \}.  $$
Notice that the map
\begin{align*}
\Sigma \qquad  &\xrightarrow{\hspace*{1cm}} \qquad X  \\
(K_A,[f_A]) &\mapsto \begin{pmatrix} [A]: K_A\oplus K_A^\perp \xrightarrow{\hspace*{.4cm}} K_A\oplus K_A^\perp\\
 k\in K_A \mapsto 0, \,
        k' \in K_A^\perp \mapsto f_A(k')
\end{pmatrix}
\end{align*}
   is an isomorphism, so $\dim (X)=\dim (\Sigma)$.  The space $\Sigma$ can be decomposed as $\Sigma=\cup_{n\geq 0} \Sigma_k$ where $$
    \Sigma_k=\{ (K_A,[f_A]) ~\vert ~ f_A: K_A^\perp \rightarrow K_A \mbox{ linear}, \, \dim K_A=k \}
    $$
    and now we can compute the dimension of $\Sigma_k$ by way of the Grasmannian $\mathrm{Gr}(k,n+1)$ Consider the projection
    \begin{align*}
      \varphi : \Sigma_k& \, \xrightarrow{\hspace*{.9cm}}  \, \mathrm{Gr}(k,n+1)\\
        (K_A,&[f_A])\mapsto K_A.
    \end{align*}
  A general fiber of $\varphi$ is given by all linear maps $f_A: K_A^\perp \rightarrow K_A$, so it is of dimension $k(n+1-k)-1 $ and hence $ \dim (\Sigma_k) =2k(n+1-k)-1$. Then $\dim (\Sigma)=\max \{ \dim \Sigma_k \}$ which occurs for $k=(n+1)/2$. Thus $\dim (X)=(n+1)^2/2-1$. 
    \end{proof}

\begin{cor}\label{cor:sgr_sym_matr_mult}
    The symmetric geometric rank of the symmetrized part $sM_{\langle n+1\rangle}$ of the matrix multiplication tensor is $\sgr(sM_{\langle n+1\rangle})=\frac{(n+1)^2}{2}$.
\end{cor}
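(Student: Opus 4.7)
The plan is to recognize that this corollary is essentially an immediate consequence of the definition of symmetric geometric rank combined with the preceding proposition and the cited result from \cite{CHILO}. No substantial new ideas are required; the work has already been done.

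First I would observe that the tensor $sM_{\langle n+1\rangle}$ lives in $\sym^3\CC^{(n+1)^2}$, so its associated hypersurface sits inside $\PP M_{n+1}(\CC) \cong \PP^{(n+1)^2 - 1}$. By the definition of the symmetric geometric rank, I need to compute
\[
\sgr(sM_{\langle n+1 \rangle}) = \codim_{\PP M_{n+1}(\CC)}\bigl(\mathrm{Sing}(sM_{\langle n+1\rangle})\bigr).
\]
The cited result \cite[Proposition 2.3]{CHILO} identifies the singular locus as $X = \{[A] \in \PP M_{n+1}(\CC) \mid A^2 = 0\}$, and the immediately preceding proposition computes $\dim X = (n+1)^2/2 - 1$.

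Next I would carry out the one-line arithmetic: since $\dim \PP M_{n+1}(\CC) = (n+1)^2 - 1$, the codimension of $X$ is
\[
\bigl((n+1)^2 - 1\bigr) - \Bigl(\tfrac{(n+1)^2}{2} - 1\Bigr) = \frac{(n+1)^2}{2},
\]
which yields the claimed value.

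There really is no obstacle here; the entire content of the corollary is packaged into the preceding proposition and the reference to \cite{CHILO}. The only subtlety worth mentioning explicitly is the ambient dimension count — that is, being careful that the codimension is taken inside $\PP M_{n+1}(\CC) \cong \PP^{(n+1)^2-1}$ rather than inside the hypersurface $V(sM_{\langle n+1\rangle})$ itself — but this is precisely the convention fixed in the definition of $\sgr$ at the start of the paper.
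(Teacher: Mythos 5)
Your proposal is correct and matches the paper exactly: the corollary follows immediately by combining \cite[Proposition 2.3]{CHILO} with the preceding proposition and subtracting the dimension of the singular locus from $\dim \PP M_{n+1}(\CC) = (n+1)^2-1$. The paper offers no further argument, so there is nothing to add.
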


\subsection{Examples of the tensor space stratification }
We conclude this section with a detailed study of spaces of prescribed symmetric geometric rank in the case of $n= 1,2$ for $d=3$. For both cases we find equations of all $\mathcal{S}_{3,r}$ contained in $\mathcal{S}_{3,n+1}=\PP(\sym^3\CC^{n+1})$.

Let us start with the classical case of binary cubics.
\begin{example}[$n=1,d=3$]
	Let $F \in (\mathrm{Sym}^3\mathbb{C}^2)^*$, so
	\[F = a_1 x^3 + a_2 x^2y + a_3 xy^2 + a_4 y^3 \]
	where $a_1,\ldots,a_4 \in \CC$.
	We wish to look at spaces of bounded symmetric geometric rank in $\PP( \mathrm{Sym}^3\CC^2)$: $$\mathcal{S}_{3,2}=\PP(\mathrm{Sym}^3\CC^2) \supset  \mathcal{S}_{3,1} .$$
 The space	$\mathcal{S}_{3,1} \subset \PP (\mathrm{Sym}^3\CC^2)$ is the set of tensors with symmetric geometric rank one. This is equivalent to the set of polynomials in $\CC[x,y]_3$ with a zero-dimensional singular locus. By \Cref{thm:S_{3,1}} we have $\mathcal{S}_{3,1}=\tau(X_3)$, hence we get the defining equations for this space by using that $F$ has symmetric geometric rank $1$ if and only if $F= L^2 M$ for linear forms $L, M \in \CC[x,y]_1$. Using this parameterization and equating coefficients, we have that the  variety $\mathcal{S}_{3,1}$ is $2$ dimensional since is is defined as the vanishing of a non-zero polynomial, namely
	\[ \mathcal{S}_{3,1} = V(a_2^2a_3^2-4a_1a_3^3-4a_2^3a_4+18a_1a_2a_3a_4-27a_1^2a_4^2 )  . \]
	A visual of the space $\mathcal{S}_{3,1}$ on the affine chart $a_4 = 1$ is shown in \Cref{fig:s31}. 
	Finally, the space $\mathcal{S}_{3,2}\setminus \mathcal{S}_{3,1}$ is the set of symmetric tensors with symmetric geometric rank two. This is equivalent to the set of smooth cubics in two variables, so it can be defined as $\mathbb{P}(\mathrm{Sym}^3\CC^2)\backslash \mathcal{S}_{3,1}$.
\end{example}
\begin{figure}[h!]
	    \centering
	    \includegraphics[width = 0.4\textwidth]{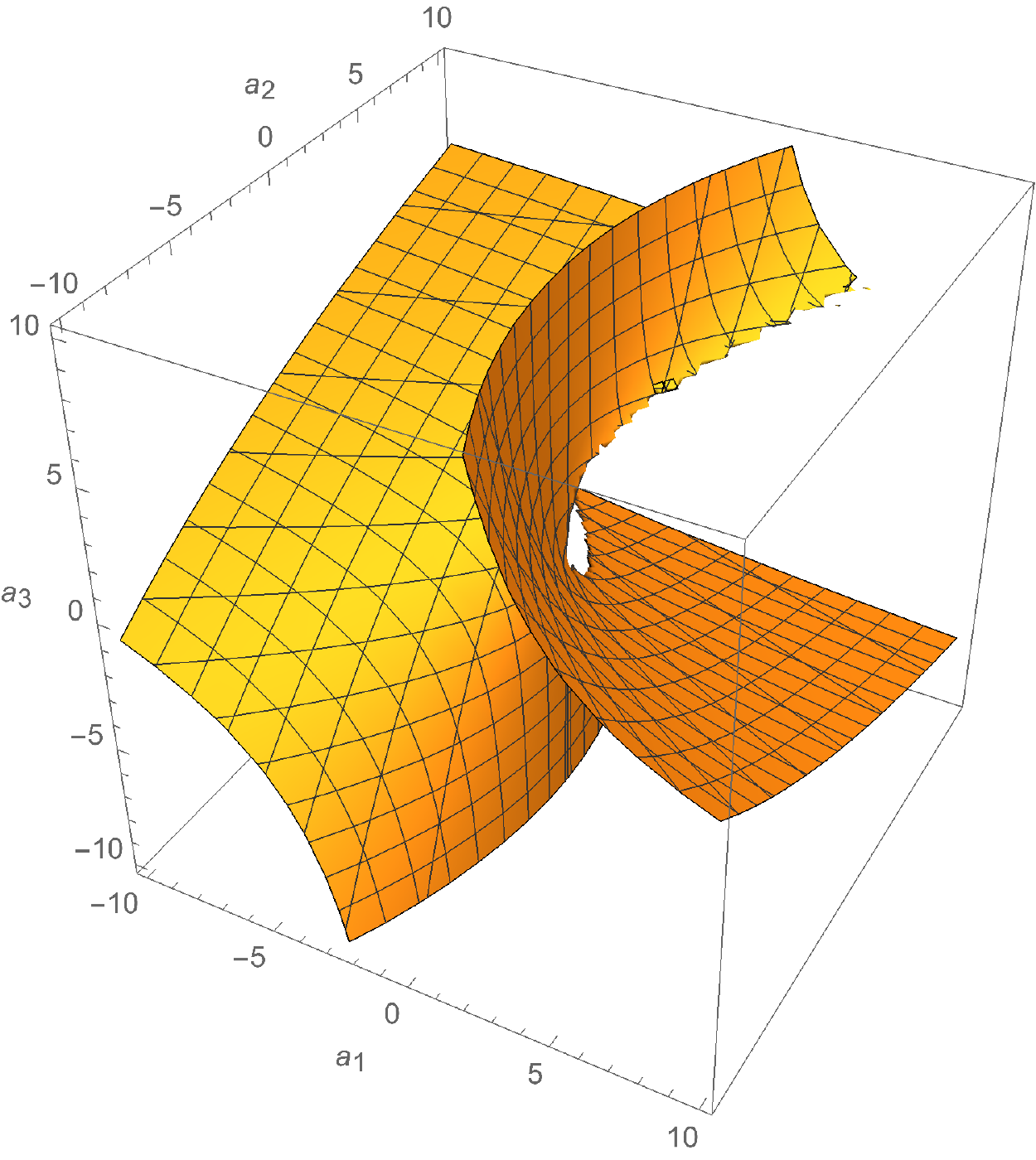}
	    \caption{The space $\mathcal{S}_{3,1}$when $a_4 =1$.}
	    \label{fig:s31}
	\end{figure}

Let us pass now to the case of cubic curves.
\begin{example}[$n=2,d=3$]
	A symmetric $3 \times 3 \times 3$ tensor can be identified with the polynomial 
	\[F = a_1 x^3 + a_2 x^2 y + a_3 x^2 z + a_4xy^2 + a_5xz^2 + a_6xyz + a_7y^3 + a_8y^2z + a_9yz^2 + a_{10}z^3, \]
	where $a_1,\ldots,a_{10} \in \CC$. Again, we wish to look at spaces of bounded symmetric geometric rank in $\PP(\mathrm{Sym}^3\CC^3)$:  $$\mathcal{S}_{3,3}=\PP(\mathrm{Sym}^3\CC^3) \supset  \mathcal{S}_{3,2} \supset \mathcal{S}_{3,1}.$$
	As above, $\mathcal{S}_{3,1}=\tau(X_3)$ (cf. \Cref{thm:S_{3,1}}) and  we are able to get defining equations for $\mathcal{S}_{3,1}$ by using the parameterization $F = L^2 M$ for linear polynomials $L,M$ and equating coefficients. The space $\mathcal{S}_{3,1}$ is defined as the vanishing of $68$ polynomials in $a_1,\ldots,a_{10}$. There are $20$ polynomials of degree $3$, $31$ of degree $4$, $13$ of degree $5$ and $4$ of degree $6$. These polynomials cut out a $4$ dimensional variety in $\PP(\mathrm{Sym}^3\CC^3)$.

	The space of polynomials with symmetric geometric rank at most $2$ is  $\mathcal{S}_{3,2}\subset \PP(\mathrm{Sym}^3\CC^3)$ are precisely homogeneous cubics in $3$ variables with a  singularity. The set of such cubics are defined by the vanishing of the \emph{discriminant}, which is an irreducible polynomial in the variables $a_1,\ldots,a_{10}$. The discriminant is a classical object in algebraic geometry that has been studied in many contexts. For a complete presentation of discriminants, we recommend \cite{GKZ}. In this case, the discriminant has $2040$ monomials of total degree $12$ in the coefficients
	$a_1,\ldots,a_{10}$. It is of degrees $(4,6,6,6,6,9,4,6,6,4)$
	in $a_1,\ldots,a_{10}$ respectively. Since $\mathcal{S}_{3,2}$ is defined as the vanishing of a non-zero polynomial, $\dim(\mathcal{S}_{3,2}) = 8$.
\end{example}

\bibliographystyle{plain}
\bibliography{refs}

\end{document}